 \newtheorem{Theorem}{Theorem}[section]
 \newtheorem{Lemma}[Theorem]{Lemma}
 \newtheorem{Proposition}[Theorem]{Proposition}
 \newtheorem{Remark}[Theorem]{Remark}
 \numberwithin{equation}{section}
\begin{document}

\title[strong openness conjecture]
 {strong openness conjecture for plurisubharmonic functions}

\author{Qi'an Guan}
\address{Qi'an Guan: School of Mathematical Sciences,
Peking University, Beijing, 100871, China.}
\email{guanqian@amss.ac.cn}
\author{Xiangyu Zhou}
\address{Xiangyu Zhou: Institute of Mathematics, AMSS, and Hua Loo-Keng Key Laboratory of Mathematics, Chinese Academy of Sciences, Beijing, China}
\email{xyzhou@math.ac.cn}

\thanks{The second author was partially supported by NSFC}

\subjclass{}

\keywords{$L^2$ extension theorem, strong openness conjecture,
plurisubharmonic function}

\date{2013.10.06}

\dedicatory{}

\commby{}


\begin{abstract}
In this article, we give a proof of the strong openness conjecture
for plurisubharmonic functions posed by Demailly.
\end{abstract}

\maketitle

\section{Introduction}

Let $X$ be a complex manifold with dimension $n$, and $\varphi$ be a
plurisubharmonic function on $X$. Let $\mathcal{I}(\varphi)$ be the
multiplier ideal sheaf associated to the plurisubharmonic function
$\varphi$ on $X$. Denote by
$$\mathcal{I}_{+}(\varphi):=\cup_{\varepsilon>0}\mathcal{I}((1+\varepsilon)\varphi).$$

In \cite{berndtsson13}, Berndtsson gave a proof of the openness
conjecture of Demailly and Koll\'{a}r in \cite{D-K01}:

\emph{\textbf{Openness conjecture:} Assume that
$\mathcal{I}(\varphi)=\mathcal{O}_{X}$. Then
$$\mathcal{I}_{+}(\varphi)=\mathcal{I}(\varphi).$$}

For the dimension two case it was proved by Favre and Jonsson in
\cite{FM05j} (see also \cite{FM05v}). For arbitrary dimension it has
been reduced to a purely algebraic statement in \cite{JM13}.

The strong openness conjecture is stated as follows, which is an
open question posed by Demailly in \cite{demailly-note2000},
\cite{demailly2010} (see also \cite{D-P03}, \cite{LiPhD},
\cite{JM12}, \cite{Gue12}, \cite{Cao12}, \cite{JM13}, \cite{Li2013},
\cite{Mat2013}, etc. ):

\emph{\textbf{Strong openness conjecture:} For any plurisubharmonic function $\varphi$ on $X$,
we have $$\mathcal{I}_{+}(\varphi)=\mathcal{I}(\varphi).$$}

The strong openness conjecture implies the openness conjecture.

In the present paper,
we prove the strong openness conjecture by the following theorem:

\begin{Theorem}
\label{t:strong_open}(solution of the strong openness conjecture)
Let $\varphi$ be a negative plurisubharmonic function on the unit
polydisc $\Delta^{n}\subset\mathbb{C}^{n}$, which satisfies
$$\int_{\Delta^{n}}|F|^{2}e^{-\varphi}d\lambda_{n}<+\infty,$$
where $F$ is a holomorphic function on $\Delta^{n}$ and
$\lambda_{n}$ is the Lebesgue on $\mathbb{C}^{n}$. Then there exist
$r>0$ and $p>1$, such that
$$\int_{\Delta^{n}_{r}}|F|^{2}e^{-p\varphi}d\lambda_{n}<+\infty,$$
where $r<1$.
\end{Theorem}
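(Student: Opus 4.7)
The plan is to derive the theorem from the Ohsawa--Takegoshi $L^2$ extension theorem in a sharp (optimal-constant) form, applied to produce a family of holomorphic approximations to $F$ carrying weighted $L^2$ bounds that, in the limit, upgrade the integrability exponent from $1$ to some $p > 1$. The crucial feature is that the \emph{sharpness} of the extension constant is what allows the exponent to strictly exceed $1$ after passing to the limit; any loss in the constant would absorb the gain.

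First, I would approximate $\varphi$ by a decreasing sequence of smooth negative plurisubharmonic functions $\varphi_j \searrow \varphi$ on a shrunken polydisc $\Delta^{n}_{r_0}$ with $r_0 < 1$, so that by monotone convergence $\sup_j \int_{\Delta^{n}_{r_0}}|F|^{2} e^{-\varphi_j}\,d\lambda_n =: M < +\infty$. It then suffices to produce a bound with constants independent of $j$. For each cutoff parameter $m \to \infty$, I would consider sublevel subvarieties $S_m \subset \Delta^{n}_{r_0}$ tied to $\varphi$ (for instance components of $\{\varphi < -m\}$, or divisors of an auxiliary holomorphic function built from $\max(\varphi,-m)$). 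Applying the sharp extension theorem with an auxiliary plurisubharmonic weight $v_m$ constructed from $\max(\varphi,-m)$, one extends $F|_{S_m}$ to a holomorphic function $F_{j,m}$ on $\Delta^{n}_{r_0}$ satisfying a bound of the form
$$\int_{\Delta^{n}_{r_0}}|F_{j,m}|^{2} e^{-\varphi_j - v_m}\,d\lambda_n \le C \int_{\Delta^{n}_{r_0}}|F|^{2} e^{-\varphi_j}\,d\lambda_n \le CM,$$
with $C$ independent of $j$ and $m$. Uniqueness of the extension, combined with the fact that $F$ itself provides a valid extension on $S_m$, should force $F_{j,m} \to F$ locally as $m,j \to \infty$, while $v_m$ can be arranged to behave like $\delta\varphi$ near the origin for some $\delta > 0$.

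Two successive applications of Fatou's lemma (first $j \to \infty$, then $m \to \infty$) then yield $\int_{\Delta^{n}_{r}} |F|^{2} e^{-(1+\delta)\varphi}\,d\lambda_n < +\infty$ on a sufficiently small polydisc $\Delta^{n}_{r}$, giving the theorem with $p = 1 + \delta$. The principal obstacle is the design of $v_m$: it must be plurisubharmonic, compatible with the sharp Ohsawa--Takegoshi estimate, force $F_{j,m} = F$ on $S_m$ via uniqueness, and in the limit produce a genuine $\delta\varphi$-worth of extra negativity on the left-hand side while leaving the right-hand side uniformly bounded by $M$. This three-fold balance of exponents is exactly where the sharpness of the extension constant is essential, and I expect it to be the most delicate part of the argument; without optimality, the gain $\delta > 0$ would be consumed by the extension constant and the conclusion would reduce to the already-known inclusion $\mathcal{I}_{+}(\varphi) \subset \mathcal{I}(\varphi)$.
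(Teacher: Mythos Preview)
Your proposal has genuine gaps and does not match the paper's argument.

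\textbf{The gaps.} First, the objects $S_m$ you propose are not analytic subvarieties: $\{\varphi < -m\}$ is an open set, and $\max(\varphi,-m)$ is plurisubharmonic but not holomorphic, so it is unclear how it produces a divisor from which Ohsawa--Takegoshi can extend. Second, Ohsawa--Takegoshi gives existence of an extension with an $L^2$ bound, not uniqueness; the step ``uniqueness \ldots\ should force $F_{j,m}\to F$'' is unjustified and in general false. Third, and most seriously, the entire mechanism is deferred to the unspecified weight $v_m$: you need $v_m$ to be plurisubharmonic, to appear on the left of the OT estimate but not on the right, and to converge to $\delta\varphi$. You correctly identify this as the principal obstacle, but offer no construction; without one, the argument is circular, since producing a weight that manufactures an extra $\delta\varphi$ on the left while keeping the right side bounded by $\int |F|^2 e^{-\varphi}$ is essentially equivalent to what is being proved.

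\textbf{What the paper actually does.} The proof is by induction on $n$ and contradiction, and Ohsawa--Takegoshi is used in a completely different way. Assuming $F\notin\mathcal I_+(\varphi)_o$, the curve selection lemma (Proposition~\ref{r:curve}) produces an irreducible germ of curve $\gamma$ through $o$ on which $F$ vanishes only at $o$ and every $g\in\mathcal I_+(\varphi)_o$ satisfies $g|_\gamma = F|_\gamma\cdot h_g$ with $h_g(o)=0$. A measure-theoretic lemma (Lemma~\ref{l:open_a}) locates, for arbitrarily large $A$, hyperplane fibers $\pi^{-1}(z_A)$ with $|z_A|<u(A)^{-1/2}$ on which $\int |F|^2 e^{-\varphi}\le A$. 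The inductive hypothesis upgrades this to $\int |F|^2 e^{-p_A\varphi}\le 2A$ on the fiber for some $p_A>1$. Ohsawa--Takegoshi then extends $F|_{\pi^{-1}(z_A)}$ to $F_A$ on the polydisc with $\int |F_A|^2 e^{-p_A\varphi}\le 8\pi A$, hence $\int |F_A|^2\le 8\pi A$. But $F_A\in\mathcal I_+(\varphi)_o$, so $F_A|_\gamma = F|_\gamma\cdot h_A$ with $h_A(o)=0$ and $h_A=1$ on $\gamma\cap\pi^{-1}(z_A)$; an elementary lower bound on the curve (Lemmas~\ref{l:open_b}, \ref{l:open_sing}, \ref{l:approx.L2}) then forces $\int |F_A|^2 \ge C\,u(A)$. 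Since $u(A)/A\to\infty$, this is a contradiction. The sharpness of the OT constant plays no role; what matters is the interplay between the curve selection lemma, the inductive hypothesis on fibers, and the quantitative lower bound on $\gamma$.
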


For $n\leq 2$, the above theorem was proved in \cite{JM12} by
studying the asymptotic jumping numbers for graded sequences of
ideals.

There are some immediate corollaries of the strong openness
conjecture, we omit them here. Some further results are being in
preparation (see our another paper), including a conjecture of
Demailly and Koll\'ar in \cite{D-K01} about the growth of the
measure of the sublevel set of psh functions related to the complex
singularity exponents.

\section{Some results used in the proof of the theorem}

In this section, we will show some preliminary results used in the
proof of the main theorem.

\subsection{A property of $L^{1}$ integrable function}
$\\$

Let $G$ be a positive Lebesgue measurable and integrable function on
a domain $\Omega\subset\subset\mathbb{C}^{n}$, i.e.,
$$\int_{\Omega}Gd\lambda_{n}<+\infty.$$

Consider the function
$$F_{G}(t):=\sup\{a|\lambda_{n}(\{G\geq a\})\geq t\},$$
$t\in(0,\lambda_{n}(\Omega)].$

We first discuss the finiteness of $F_{G}(t)$:

If for some $t_{0}$, $F_{G}(t_{0})=+\infty$, then $\lambda_{n}(G\geq
A_{j})\geq t_{0}$, where $A_{j}$ is a number sequence tending to
$+\infty$ when $j\to+\infty$. Since $G$ is $L^1$ integrable, we have
$$t_{0}A_{j}\leq A_{j}\lambda_{n}(\{G\geq A_{j})\leq\int_{\{G\geq A_{j}\}}Gd\lambda_{n}\leq\int_{\Omega}Gd\lambda_{n}<+\infty.$$
Letting $A_{j}\to+\infty$, we thus obtain a contradiction. Therefore
$F_{G}(t)<+\infty$ for any $t$.

Secondly, we discuss the decreasing property of $F_{G}(t)$:

Note that $\{A|\lambda_{n}(G\geq A)\geq
t_{1}\}\supset\{A|\lambda_{n}(G\geq A)\geq t_{2}\}$, where
$t_{1}\leq t_{2}$. Then we have $F_{G}(t_{1})\geq F_{G}(t_{2})$,
when $t_{1}\leq t_{2}$.

The first lemma is about the sublevel set of $F_{G}$:

\begin{Lemma}
\label{l:level}
We have
\begin{equation}
\label{equ:level}
\mu_{\mathbb{R}}(\{t|F_{G}(t)\geq a\})=\lambda_{n}(\{G\geq a\}),
\end{equation}
for any $a>0$,
where $\mu_{\mathbb{R}}$ is the Lebesgue measure on $\mathbb{R}$.
Moreover, we have
$$\mu_{\mathbb{R}}(\{t|F_{G}(t)> a\})=\lambda_{n}(\{G> a\}).$$
\end{Lemma}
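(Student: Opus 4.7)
The plan is to treat $F_G$ as (essentially) the decreasing rearrangement of $G$ and prove the claim by identifying the sublevel set $\{t : F_G(t) \geq a\}$ explicitly as an interval whose length is the distribution function of $G$.

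First, I would introduce the distribution function $\mu(a) := \lambda_n(\{G \geq a\})$, which is decreasing in $a$. A short measure-theoretic check shows $\mu$ is left-continuous: for $a_k \nearrow a$ one has $\{G \geq a_k\} \searrow \{G \geq a\}$ (the intersection equals $\{G \geq a\}$ since $G(x) \geq a_k$ for all $k$ iff $G(x) \geq a$), and finiteness of $\mu(a_1)$ lets me pass the limit through. Similarly, $\lim_{a' \searrow a} \mu(a') = \lambda_n(\{G > a\})$, since $\{G \geq a_k\} \nearrow \{G > a\}$ for $a_k \searrow a$ with $a_k > a$.

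Next I would prove the key equivalence: $F_G(t) \geq a$ if and only if $t \leq \mu(a)$. The ``if'' direction is immediate from the definition of $F_G$ as a supremum. For the ``only if'' direction, $F_G(t) \geq a$ yields, for every $a' < a$, some $b$ with $a' < b$ and $\mu(b) \geq t$, whence $\mu(a') \geq \mu(b) \geq t$ by monotonicity; then left-continuity of $\mu$ gives $\mu(a) = \lim_{a' \nearrow a} \mu(a') \geq t$. Consequently, $\{t \in (0, \lambda_n(\Omega)] : F_G(t) \geq a\}$ is precisely the interval $(0, \mu(a)]$, so its $\mu_{\mathbb{R}}$-measure equals $\mu(a) = \lambda_n(\{G \geq a\})$, proving \eqref{equ:level}.

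For the strict-inequality statement, I would argue that $F_G(t) > a$ if and only if there exists $a' > a$ with $F_G(t) \geq a'$, which by the equivalence above is if and only if $t \leq \mu(a')$ for some $a' > a$, i.e., $t \leq \sup_{a' > a} \mu(a') = \lim_{a' \searrow a} \mu(a') = \lambda_n(\{G > a\})$. Hence $\{t : F_G(t) > a\}$ is the interval $(0, \lambda_n(\{G > a\})]$, with the asserted measure. There is no real obstacle here; the only point that requires care is the one-sided continuity of $\mu$, which is precisely what lets one convert the supremum defining $F_G$ into an honest value of $\mu$.
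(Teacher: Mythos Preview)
Your argument is correct and follows essentially the same route as the paper: both identify $\{t : F_G(t)\geq a\}$ with $\{t : t\leq \lambda_n(\{G\geq a\})\}$ via left-continuity of $a\mapsto \lambda_n(\{G\geq a\})$ (the paper casts this as a contradiction argument and reduces the strict case to the non-strict one by writing $\{F_G>a\}=\bigcup_k\{F_G\geq a+\tfrac1k\}$, but the substance is identical). One small imprecision in your strict-inequality step: ``$t\leq\mu(a')$ for some $a'>a$'' need not coincide with ``$t\leq \sup_{a'>a}\mu(a')$'' at the single endpoint $t=\sup_{a'>a}\mu(a')$ when the supremum is not attained, so the set may be $(0,L)$ rather than $(0,L]$; this does not affect the $\mu_{\mathbb{R}}$-measure, so your conclusion stands.
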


\begin{proof}
Since
$$\mu_{\mathbb{R}}(\{t|F_{G}(t)> a\})=\lim_{k\to+\infty}\mu_{\mathbb{R}}(\{t|F_{G}(t)\geq a+\frac{1}{k}\}),$$
and
$$\lambda_{n}(\{G> a\})=\lim_{k\to+\infty}\lambda_{n}(\{G\geq a+\frac{1}{k}\}),$$
we only need to prove
$$\mu_{\mathbb{R}}(\{t|F_{G}(t)\geq a\})=\lambda_{n}(\{G\geq a\}),$$
for any $a>0$.

Note that $$\sup\{a_{1}|\lambda_{n}(\{G\geq
a_{1}\})\geq\lambda_{n}(\{G\geq a\})\}\geq a.$$ Then we have
$$\lambda_{n}(\{G\geq a\})\in\{t|\sup\{a_{1}|\lambda_{n}(\{G\geq a_{1}\})\geq t\}\geq a\},$$
therefore
$$\{t|\sup\{a_{1}|\lambda_{n}(\{G\geq a_{1}\})\geq t\}\geq a\}\supseteq\{t|\lambda_{n}(\{G\geq a\})\geq t\},$$
where $t>0$.

If $"\supseteq"$ in the above relation is strictly $"\supset"$, then
there exists $t_{0}$ such that

1). $\sup\{a_{1}|\lambda_{n}(\{G\geq a_{1}\})\geq t_{0}\}\geq a$;

2). $t_{0}>\lambda_{n}(\{G\geq a\})$.

Let
$$a_{0}:=\sup\{a_{1}|\lambda_{n}(\{G\geq a_{1}\})\geq t_{0}\}\geq a.$$
Note that
$$\lambda_{n}(\cap_{a_{1}<a_{0}}\{G\geq a_{1}\})=\inf_{a_{1}<a_{0}}\lambda_{n}(\{G\geq a_{1}\}).$$
Then we have $\lambda_{n}(\{G\geq a_{0}\})\geq t_{0}$.

As $t_{0}>\lambda_{n}(\{G\geq a\})$, we have
$$\lambda_{n}(\{G\geq a_{0}\})>\lambda_{n}(\{G\geq a\}),$$
which is a contradiction to
$$a_{0}\geq a$$

Then the following holds:
\begin{equation}
\label{equ:0807a}
\{t|\sup\{a_{1}|\lambda_{n}(\{G\geq a_{1}\})\geq t\}\geq a\}=\{t|\lambda_{n}(\{G\geq a\})\geq t\},
\end{equation}
where $t>0$.

According to the definition of $F_{G}$ and equality \ref{equ:0807a},
it follows that
$$\{t|F_{G}(t)\geq a\}
=\{t|\sup\{a_{1}|\mu(\{G\geq a_{1}\})\geq t\}\geq a\}=\{t|\lambda_{n}(\{G\geq a\})\geq t\},$$
where $t>0$.

Note that $$\mu_{\mathbb{R}}(\{t|F_{G}(t)\geq
a\})=\mu_{\mathbb{R}}\{t|\lambda_{n}(\{G\geq a\})\geq
t\}=\lambda_{n}(\{G\geq a\}),$$ for $t>0$. We have thus proved the
present lemma.
\end{proof}

Denote by $$s(y):=y^{-1}(-\log y)^{-1},$$ where $y\in (0,e^{-1})$.
It is clear that $s$ is strictly decreasing on $(0,e^{-1})$.

We define a function $u$ by
$$u(s(y))=y^{-1},$$
where
$u\in C^{\infty}((e,+\infty))$.
It is clear that $u$ is strictly increasing on $(e,+\infty)$,

The second lemma is about the measure of the level set of $G$:

\begin{Lemma}
\label{l:open_a} For $A>e$, we have
$$\liminf_{A\to+\infty}\lambda_{n}(\{G> A\})u(A)=0,$$
where
$\lambda_{n}$ is the Lebesgue measure of $\mathbb{C}^{n}$.
Especially, $\lim_{A\to+\infty}\frac{A}{u(A)}=0$.
\end{Lemma}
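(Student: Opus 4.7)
The plan is to make the function $u$ explicit enough to see its growth, and then combine this with the layer-cake formula $\int_\Omega G\, d\lambda_n = \int_0^\infty \lambda_n(\{G > A\})\, dA$, which is finite by hypothesis.

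I would first extract a functional equation for $u$. Substituting $A = s(y)$ and $y = 1/u(A)$ into the defining relations gives $A = 1/(y(-\log y))$ and $-\log y = \log u(A)$, so after a short manipulation
$$u(A) = A \log u(A), \qquad A \in (e, +\infty).$$
Since $u$ is strictly increasing and tends to $+\infty$, this immediately yields $A/u(A) = 1/\log u(A) \to 0$, which settles the ``especially'' statement.

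For the main claim I would argue by contradiction. If $\liminf_{A\to\infty} \lambda_n(\{G > A\}) u(A) > 0$, then there exist $c > 0$ and $A_0 > e$ with $\lambda_n(\{G > A\}) \geq c/u(A)$ for all $A \geq A_0$. Combined with the layer-cake identity and $\int_\Omega G\, d\lambda_n < +\infty$, this forces $\int_{A_0}^\infty dA/u(A) < +\infty$. The crux of the proof is therefore to show that $\int^\infty dA/u(A)$ actually diverges.

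To evaluate this integral I would perform the change of variables $A = s(y)$, so that $1/u(A) = y$, and differentiating $s(y) = [y(-\log y)]^{-1}$ gives $|s'(y)| = (-\log y - 1)/[y^2(\log y)^2]$. Respecting the fact that $s$ is decreasing, the question reduces to
$$\int_{A_0}^\infty \frac{dA}{u(A)} = \int_0^{y_0} \frac{-\log y - 1}{y(\log y)^2}\, dy = \int_0^{y_0} \frac{dy}{y(-\log y)} - \int_0^{y_0} \frac{dy}{y(\log y)^2},$$
and the further substitution $v = -\log y$ turns these into $\int^\infty dv/v$ (divergent) and $\int^\infty dv/v^2$ (convergent). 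Hence the difference is $+\infty$, producing the desired contradiction. The one thing that needs care is tracking the orientation in the change of variables, but this is routine given the strict monotonicity of $s$ on $(0,e^{-1})$.
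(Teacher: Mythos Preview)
Your argument is correct and is in fact more direct than the paper's. The paper works through the decreasing rearrangement $F_G(t)=\sup\{a:\lambda_n(\{G\ge a\})\ge t\}$ and the preceding Lemma on its sublevel sets: from $\int_0^{\lambda_n(\Omega)}F_G(t)\,dt<\infty$ it extracts a sequence $t_j\to 0$ with $F_G(t_j)=o\bigl(t_j^{-1}(-\log t_j)^{-1}\bigr)$, translates this via the rearrangement lemma into $\lambda_n(\{G>F_G(t_j)\})\le t_j$, and then shows $u(F_G(t_j))=o(t_j^{-1})$ by a short contradiction. You bypass all of this by appealing to the layer--cake identity $\int_\Omega G\,d\lambda_n=\int_0^\infty \lambda_n(\{G>A\})\,dA$ and reducing the question to the divergence of $\int^\infty dA/u(A)$, which you compute cleanly via $A=s(y)$. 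Your derivation of the functional equation $u(A)=A\log u(A)$ also makes the ``especially'' clause immediate, whereas the paper argues it separately through the parametrisation $A=1/(t_A(-\log t_A))$. The only cosmetic point is that writing the final integral as a ``difference'' of a divergent and a convergent piece is informal; since the integrand $(-\log y-1)/[y(\log y)^2]$ is nonnegative on $(0,e^{-1})$, it is cleaner to bound it below by $\tfrac{1}{2}\cdot\tfrac{1}{y(-\log y)}$ for $y<e^{-2}$ and conclude divergence directly.
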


\begin{proof}
According to the definition of Lebesgue integration and Lemma
\ref{l:level}, it follows that
$$\int_{0}^{\mu(\Omega)}F_{G}(t)dt=\int_{\Omega}Gd\lambda_{n}<+\infty.$$
Then we have
$$\liminf_{t\to 0}\frac{F_{G}(t)}{t^{-1}(-\log t)^{-1}}=0,$$
which implies that there exists $t_{j}\to 0$ when $j\to+\infty$,
such that
\begin{equation}
\label{equ:lim_F_G}
\lim_{j\to+\infty}\frac{F_{G}(t_{j})}{t_{j}^{-1}(-\log t_{j})^{-1}}=0.
\end{equation}

Using Lemma \ref{l:level}, we have
$$\lambda_{n}(\{G> F_{G}(t_{j})\})=\mu_{\mathbb{R}}(\{t|F_{G}(t)> F_{G}(t_{j})\})\leq\mu_{\mathbb{R}}((0,t_{j}))=t_{j}.$$

We now want to prove that $u(F_{G}(t_{j}))=o(t_{j}^{-1})$ by
contradiction: if not, there exists $\varepsilon_{0}>0$, such that
$u(F_{G}(t_{j}))\leq \varepsilon_{0}t_{j}^{-1}$. However,
$$u(F_{G}(t_{j}))\leq \varepsilon_{0}t_{j}^{-1}=u(\frac{1}{\frac{t_{j}}{\varepsilon_{0}}(-\log\frac{t_{j}}{\varepsilon_{0}})}).$$

According to the strictly increasing property of $u$, it follows
that $F_{G}(t_{j})\leq
\frac{1}{\frac{t_{j}}{\varepsilon_{0}}(-\log\frac{t_{j}}{\varepsilon_{0}})}$,
which is contradict to equality \ref{equ:lim_F_G} because
$\lim_{t_{j}\to0}\frac{t_{j}(-\log
t_{j})}{\frac{t_{j}}{\varepsilon_{0}}(-\log\frac{t_{j}}{\varepsilon_{0}})}=\varepsilon_{0}$.
Now we obtain $u(F_{G}(t_{j}))=o(t_{j}^{-1})$.

Then we have
$$\lim_{j\to+\infty}\mu(\{G> F_{G}(t_{j})\})u(F_{G}(t_{j}))\leq\lim_{j\to+\infty}t_{j}o(t_{j}^{-1})=0.$$

Note that if $F_{G}(t_{j})$ is bounded above, when $t_{j}$ tends to
$0$, then $G$ has a positive upper bound. Therefore $\mu(\{G>
A\})=0$ for $A$ large enough.

Then we have proved
$$\liminf_{A\to+\infty}\lambda_{n}(\{G> A\})u(A)=0.$$

As $\frac{1}{t(-\log t)}$ is strictly decreasing on $(0,e^{-1})$,
then for any $A>e$, there exists $t_{A}$ such that

1). $\frac{1}{t_{A}(-\log t_{A})}=A$;

2). $t_{A}$ goes to zero, when $A$ goes to $+\infty$.

As
$$\frac{A}{u(A)}=\frac{\frac{1}{t_{A}(-\log t_{A})}}{u(\frac{1}{t_{A}(-\log t_{A})})}=\frac{\frac{1}{t_{A}(-\log t_{A})}}{\frac{1}{t_{A}}}=\frac{1}{-\log t_{A}},$$
then we obtain
$$\lim_{A\to+\infty}\frac{A}{u(A)}=0,$$
by the above property $2)$ of $t_{A}$.

The present lemma is thus proved.

\end{proof}

\subsection{Estimation of integration of holomorphic functions on singular Riemann surfaces}

\begin{Lemma}
\label{l:open_b} Let $h\not\equiv0$ be a holomorphic function on the
disc $\Delta_{r}$ in $\mathbb{C}$. Let $f_{a}$ be a holomorphic
function on $\Delta_{r}$, which satisfies $f_{a}|_{o}=0$ and
$f_{a}(b)=1$ for any $b^{k}=a ($$k$ is a positive integer), then we
have
$$\int_{\Delta_{r}}|f_{a}|^{2}|h|^{2}d\lambda_{1}>C_{1}|a|^{-2},$$
where $a\in\Delta_{r}$ whose norm is small enough, $C_{1}$ is a
positive constant independent of $a$ and $f_{a}$.
\end{Lemma}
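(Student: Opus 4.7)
\textbf{Proof plan for Lemma \ref{l:open_b}.}

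The plan is to reduce the integral lower bound to a weighted Cauchy--Schwarz estimate on the Taylor coefficients of $f_a$. On a small subdisc the weight $|h|^2$ is comparable to the monomial $|z|^{2m}$ (with $m$ the vanishing order of $h$ at $0$); Parseval's identity then converts the integral into an exact sum on coefficients, and the interpolation conditions on $f_a$ provide exactly the control needed to force the $|a|^{-2}$ blow-up.

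In detail, I write $h(z) = z^m \tilde h(z)$ with $\tilde h(0)\neq 0$ and pick $r' \in (0,r)$ together with $c_0 > 0$ so that $|\tilde h| \geq c_0$ on $\Delta_{r'}$. Writing $f_a(z) = \sum_{n\geq 0}\alpha_n z^n$, polar coordinates yield
$$\int_{\Delta_r}|f_a|^2|h|^2\,d\lambda_1 \;\geq\; c_0^2\int_{\Delta_{r'}}|f_a|^2|z|^{2m}\,d\lambda_1 \;=\; \pi c_0^2 \sum_{n\geq 0}\frac{(r')^{2n+2m+2}}{n+m+1}|\alpha_n|^2.$$
The hypothesis $f_a(0)=0$ gives $\alpha_0=0$. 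Summing the $k$ conditions $f_a(b)=1$ over the solutions of $b^k=a$ and using the character-sum identity $\sum_{b^k=a}b^n = ka^{n/k}$ when $k\mid n$ and $0$ otherwise collapses these pointwise conditions to the single scalar relation
$$\sum_{\ell\geq 1}\alpha_{\ell k}\,a^{\ell} \;=\; 1.$$
Cauchy--Schwarz applied to this relation then gives
$$1 \;\leq\; \Bigl(\sum_{\ell\geq 1}\frac{(r')^{2\ell k+2m+2}}{\ell k+m+1}|\alpha_{\ell k}|^2\Bigr)\Bigl(\sum_{\ell\geq 1}\frac{\ell k+m+1}{(r')^{2\ell k+2m+2}}|a|^{2\ell}\Bigr).$$
For $|a|$ small enough (say $|a|\leq (r')^k/\sqrt 2$), the second factor is dominated by its $\ell=1$ term and bounded by $C_*|a|^2$ for some $C_* = C_*(k,m,r')$. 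Hence the first factor is at least $C_*^{-1}|a|^{-2}$, and combining with the previous display produces the required bound $\int_{\Delta_r}|f_a|^2|h|^2\,d\lambda_1 \geq C_1|a|^{-2}$ with $C_1 = \pi c_0^2/C_*$ independent of $a$ and of the particular choice of $f_a$.

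The main step is the collapse of the $k$ pointwise interpolation conditions, via the character-sum identity over the $k$-th roots of $a$, to a single scalar constraint on the ``$k$-divisible'' Taylor coefficients of $f_a$; once this is in hand, the $|a|^{-2}$ blow-up is immediate, because the leading ($\ell=1$) term of the Cauchy--Schwarz dual sum is of size $|a|^2$. The preliminary reduction $|h|^2 \geq c_0^2|z|^{2m}$ on $\Delta_{r'}$ is what allows Parseval to yield a closed-form diagonal sum against which the single scalar constraint can be tested; working directly with the non-radial weight $|h|^2$ would obstruct this clean coefficient decomposition.
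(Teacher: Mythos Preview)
Your proof is correct and follows essentially the same approach as the paper: reduce $|h|^2$ to $|z|^{2m}$ on a smaller disc, use Parseval to convert the integral into a coefficient sum, average the interpolation conditions over the $k$-th roots of $a$ to obtain $\sum_{\ell\ge 1}\alpha_{\ell k}a^{\ell}=1$, and apply Cauchy--Schwarz against the dual sum, whose leading term is of size $|a|^{2}$. Your Cauchy--Schwarz step is in fact written a bit more carefully than the paper's (you keep the correct $\ell k$ indices in the dual factor), but the argument is the same.
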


\begin{proof}
As $h\not\equiv0$, we may write $h=z^{i}h_{1}$ near $o$, where
$h_{1}|_{o}\neq 0$. Then there exists $r'<r$, such that
$|h_{1}|_{|\Delta_{r'}}\geq C_{0}>0$. Therefore it suffices to
consider the case that $h=z^{i}$ on $\Delta_{r'}$.

By Taylor expansion at $o$,
we have
$$f(z)=\sum_{j=1}^{\infty}c_{j}z^{j}.$$

As $f(b)=1$, then
$$\sum_{j=1}^{\infty}c_{kj}a^{j}=\frac{1}{k}\sum_{1\leq l\leq k}\sum_{j=1}^{\infty}c_{j}b_{l}^{j}=1$$
where $b_{l}^{k}=a$, and $\sum_{1\leq l\leq k} b_{l}^{j}=0$ when $0<j<k$.

It is clear that
\begin{equation}
\label{equ:0808a}
\int_{\Delta_{r'}}|f_{a}|^{2}|h|^{2}d\lambda_{1}=\int_{\Delta_{r'}}|f_{a}|^{2}|z^{i}|^{2}d\lambda_{1}=
2\pi\sum_{j=1}^{\infty}|c_{j}|^{2}\frac{{r'}^{2j+2i+2}}{2j+2i+2}
\end{equation}

Using Schwartz Lemma, we have
\begin{equation}
\label{}
\begin{split}
&(\sum_{j=1}^{\infty}|c_{j}|^{2}\frac{{r'}^{2j+2i+2}}{2j+2i+2})(\sum_{j=1}^{\infty}\frac{2j+2i+2}{{r'}^{2j+2i+2}}|a|^{2j})
\\&\geq(\sum_{j=1}^{\infty}|c_{kj}|^{2}\frac{{r'}^{2kj+2i+2}}{2kj+2i+2})(\sum_{j=1}^{\infty}\frac{2j+2i+2}{{r'}^{2j+2i+2}}|a|^{2j})
\geq|\sum_{j=1}^{\infty}c_{kj}a^{j}|^{2}=1.
\end{split}
\end{equation}

Note that
$$\sum_{j=1}^{\infty}\frac{2j+2i+2}{{r'}^{2j+2i+2}}|a|^{2j}
=|\frac{a}{r'}|^{2}((2i+2)\frac{{r'}^{-2i-2}}{1-|\frac{a}{r'}|^{2}}+2\frac{{r'}^{-2i-2}}{(1-|\frac{a}{r'}|^{2})^{2}}),$$
and
$((2i+2)\frac{{r'}^{-2i-4}}{1-|\frac{a}{r'}|^{2}}+2\frac{{r'}^{-2i-4}}{(1-|\frac{a}{r'}|^{2})^{2}})$
has a uniformly upper bound independent of $a$ when
$|a|<\frac{r'}{2}$. The Lemma thus follows.

\end{proof}

Let's recall the
local parametrization theorem:

\begin{Theorem}
\label{t:para}(see \cite{demailly-book})
Let $\mathscr{J}$ be a prime ideal of $\mathcal{O}_{n}$ and
let $\mathcal{C}'=V(\mathscr{J})$ be an analytic curve at $o$.
Then the ring $\mathcal{O}_{n}/\mathscr{J}$ is a finite integral extension of $\mathcal{O}_{d}$;
let $q$ be the degree of the extension.
There exists a local coordinates
$$(z';z'')=(z_{1};z_{2},\cdots,z_{n}),$$
such that
if $\Delta'_{r'}$ and $\Delta''_{r''}$ are polydisks of sufficient small
radii $r'$ and $r''$ and if $r'\leq \frac{r''}{C}$ with $C$ large,
the projective map $\pi':\mathcal{C}'\cap (\Delta'_{r'}\times\Delta''_{r''})\to\Delta'_{r'}$
is a ramified covering with $q$ sheets, whose ramification locus
is contained in $S=\{o'\}\subset\Delta'_{r'}$.
This means that

a), the open set $\mathcal{C}'_{S}:=\mathcal{C}'\cap ((\Delta'_{r'}\setminus S)\times\Delta''_{r''})$
is a smooth $1-$dimensional manifold, dense in $\mathcal{C}'\cap(\Delta'_{r'}\times\Delta''_{r''})$;

b), $\pi':\mathcal{C}'_{S}\to\Delta'_{r'}\setminus S$ is an unramified covering;

c), the fibre $\pi^{'-1}(z')$ have exactly $q$ elements if $z'\in \Delta'\setminus S$ and at most $q$ if $z'\in S$.

Moreover, $\mathcal{C}'_{S}$ is a connected covering of $\Delta'_{r'}\setminus S$,
and $\mathcal{C}'\cap(\Delta'_{r'}\times\Delta''_{r''})$ is contained in a cone $|z''|\leq\frac{C}{6}|z'|$.
\end{Theorem}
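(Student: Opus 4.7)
The plan is to reduce the statement to a Weierstrass preparation picture for a well-chosen minimal polynomial. Because $\mathscr{J}$ is a prime ideal in $\mathcal{O}_{n}$ with $\dim\mathcal{O}_{n}/\mathscr{J}=1$, the analytic Noether normalization lemma says that, after a generic linear change of coordinates, the inclusion $\mathcal{O}_{1}=\mathbb{C}\{z_{1}\}\hookrightarrow\mathcal{O}_{n}/\mathscr{J}$ is a finite injective local ring homomorphism. The integer $q$ in the statement is the degree of this extension, and $z'=z_{1}$ is the distinguished coordinate.

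Next I would pick a primitive element. After a further generic linear combination of $z_{2},\ldots,z_{n}$, the class of $z_{n}$ generates $\mathrm{Frac}(\mathcal{O}_{n}/\mathscr{J})$ over $\mathrm{Frac}(\mathcal{O}_{1})$, and hence satisfies a monic minimal polynomial $P(z_{1},T)\in\mathcal{O}_{1}[T]$ of degree $q$, which by Weierstrass preparation may be taken as a Weierstrass polynomial in $T$. Its discriminant $\delta(z_{1})\in\mathcal{O}_{1}$ is nonzero (separability in characteristic zero), so after shrinking $r'$ we have $\{\delta=0\}\subset\{o'\}$. Each of the remaining coordinates $z_{2},\ldots,z_{n-1}$ is also integral over $\mathcal{O}_{1}$ and admits its own Weierstrass polynomial whose non-leading coefficients vanish at $o'$; the Cauchy root bound then produces a cone-type estimate for $z''$ in terms of $z'$. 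Choosing $r'\leq r''/C$ with $C$ sufficiently large absorbs the constants and simultaneously gives the containment in $\Delta''_{r''}$ and the stated cone $|z''|\leq\frac{C}{6}|z'|$.

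With this setup in hand, parts (a)--(c) follow from the implicit function theorem: on $\Delta'_{r'}\setminus\{o'\}$ the derivative $\partial P/\partial T$ does not vanish along $\mathcal{C}'_{S}$, so $\pi'$ is a local biholomorphism there and $\mathcal{C}'_{S}$ is a smooth unramified $q$-sheeted covering of $\Delta'_{r'}\setminus\{o'\}$; at the origin the number of preimages can only drop, giving the "at most $q$" clause.

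The main obstacle is the connectedness of $\mathcal{C}'_{S}$. If it split as $\mathcal{C}_{1}\sqcup\mathcal{C}_{2}$, the elementary symmetric functions of the $z_{n}$-values on each $\mathcal{C}_{i}$ would define two monic polynomials $P_{1},P_{2}\in\mathcal{O}(\Delta'_{r'}\setminus\{o'\})[T]$ with $P=P_{1}P_{2}$ on the punctured disc. Boundedness of these coefficients near $o'$ together with Riemann's removable singularity theorem extends them holomorphically across the origin, yielding a nontrivial factorization $P=P_{1}P_{2}$ in $\mathcal{O}_{1}[T]$ with $1\le\deg P_{i}<q$. This contradicts the irreducibility of the minimal polynomial of $z_{n}$ over $\mathrm{Frac}(\mathcal{O}_{1})$, which itself encodes the primality of $\mathscr{J}$.
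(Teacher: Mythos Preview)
The paper does not prove this theorem at all: it is quoted as a known result from Demailly's book \cite{demailly-book} and used as a black box, so there is no ``paper's own proof'' to compare against. Your outline is the standard argument for the local parametrization theorem (Noether normalization, primitive element, Weierstrass polynomial with nonvanishing discriminant, Cauchy root bounds for the cone estimate, and irreducibility of the minimal polynomial for connectedness), which is essentially what one finds in the cited reference; as a sketch it is correct.
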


Let $\Delta'$ and $\Delta''$ be unit disk with coordinates $(z_{1})$
and unit polydisc with coordinates $(z_{2},\cdots,z_{n})$
respectively.

Let
$$\pi:\Delta'\times\Delta''\to \Delta'$$
be the projective map which is given by
$$\pi(z_{1};z_{2},\cdots,z_{n})=z_{1}.$$

We have the following Remark of Theorem \ref{t:para}.

\begin{Remark}
\label{r:para}
Let $\mathscr{J}$ be a prime ideal of $\mathcal{O}_{n}$ and
let $\mathcal{C}'=V(\mathscr{J})$ be an analytic curve at $o$.
Then the ring $\mathcal{O}_{n}/\mathscr{J}$ is a finite integral extension of $\mathcal{O}_{d}$;
let $q$ be the degree of the extension,
there exists a biholomorphic map $j$ from
a neighborhood of $\overline{\Delta'\times\Delta''}$
to a neighborhood  $U_{o}$ of $o$,
such that the projective map $\pi|_{\mathcal{C}\cap (\Delta'\times\Delta'')}\to\Delta'$
is a ramified covering with $q$ sheets, whose ramification locus
is contained in $S=\{o'\}\subset\Delta'$
where
$$\mathcal{C}:=j^{-1}(\mathcal{C}').$$
This means that

a), the open set $\mathcal{C}_{S}:=\mathcal{C}\cap ((\Delta'\setminus S)\times\Delta'')$
is a smooth $1-$dimensional manifold, dense in $\mathcal{C}\cap(\Delta'\times\Delta'')$;

b), $\pi|_{\mathcal{C}_{S}}:\mathcal{C}_{S}\to\Delta'\setminus S$ is an unramified covering;

c), the fibre $\pi^{-1}(z')$ has exactly $q$ elements if
$z'\in\Delta'\setminus S$ and at most $q$ if $z'\in S$.

Moreover, $\mathcal{C}_{S}$ is a connected covering of $\Delta'\setminus S$,
and $\mathcal{C}\cap(\Delta'\times\Delta'')$ is contained in a cone $|z''|\leq\frac{1}{6}|z'|$.

\end{Remark}

Using Lemma \ref{l:open_b} and Remark \ref{r:para}, we obtain the
following singular version of Lemma \ref{l:open_b}:

\begin{Lemma}
\label{l:open_sing}
Let $h$ be a holomorphic function on an analytic curve $\mathcal{C}$ as in Remark \ref{r:para}.
Let $f_{a}$ be a holomorphic function on $\mathcal{C}$,
which satisfies
$f(o)=0$ and $f_{a}(\pi^{-1}(a)\cap\mathcal{C})=1$,
then we have
$$\int_{\mathcal{C}_{S}}|f_{a}|^{2}|h|^{2}(\pi|_{\mathcal{C}_{S}})^{*}d\lambda_{\Delta'}>C_{2}|a|^{-2},$$
when $a\in\Delta'$ and $|a|$ is small enough, where $C_{2}$ is a
positive constant independent of $a$ and $f_{a}$.
\end{Lemma}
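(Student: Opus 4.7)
The plan is to reduce this to the planar Lemma \ref{l:open_b} by normalizing the irreducible curve $\mathcal{C}$. Since $\mathscr{J}$ is prime, $\mathcal{C}$ is irreducible, and Remark \ref{r:para} says that $\pi|_{\mathcal{C}_S}:\mathcal{C}_S\to\Delta'\setminus\{o\}$ is a connected unramified $q$-sheeted covering of the punctured disk. Such a covering is isomorphic to the standard map $w\mapsto w^q$ of punctured disks, and this extends across the singular point to a normalization $\nu:\tilde\Delta\to\mathcal{C}\cap(\Delta'\times\Delta'')$, where $\tilde\Delta\subset\mathbb{C}_w$ is the unit disk, $\nu$ restricts to a biholomorphism $\tilde\Delta\setminus\{0\}\to\mathcal{C}_S$ with $\nu(0)=o$, and, after an appropriate choice of coordinate on $\tilde\Delta$, $\pi\circ\nu(w)=w^q$.

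Next I pull the integral back to $\tilde\Delta$. Set $\tilde f_a:=f_a\circ\nu$ and $\tilde h:=h\circ\nu$, both holomorphic on $\tilde\Delta$. Since $\pi\circ\nu(w)=w^q$ is holomorphic with real Jacobian $|qw^{q-1}|^2=q^2|w|^{2q-2}$, we have
$$\nu^{*}\bigl((\pi|_{\mathcal{C}_S})^{*}d\lambda_{\Delta'}\bigr)=q^{2}|w|^{2q-2}\,d\lambda_{w}$$
on $\tilde\Delta\setminus\{0\}$. Putting $H(w):=qw^{q-1}\tilde h(w)$, which is holomorphic on all of $\tilde\Delta$, the change of variables gives
$$\int_{\mathcal{C}_S}|f_a|^{2}|h|^{2}\,(\pi|_{\mathcal{C}_S})^{*}d\lambda_{\Delta'}=\int_{\tilde\Delta}|\tilde f_a|^{2}|H|^{2}\,d\lambda_{w}.$$

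Now I verify the hypotheses of Lemma \ref{l:open_b} with $k=q$: the condition $f_a(o)=0$ gives $\tilde f_a(0)=0$, and for any $b\in\tilde\Delta$ with $b^q=a$ one has $\pi(\nu(b))=a$, hence $\nu(b)\in\pi^{-1}(a)\cap\mathcal{C}_S$ and $\tilde f_a(b)=f_a(\nu(b))=1$. Provided $h\not\equiv 0$ on $\mathcal{C}$ (the implicit analogue of the corresponding hypothesis in Lemma \ref{l:open_b}), $H\not\equiv 0$ on $\tilde\Delta$. Applying Lemma \ref{l:open_b} to $\tilde f_a$ and $H$ on a fixed subdisk $\Delta_{r'}\subset\tilde\Delta$, valid for all $a$ with $|a|$ sufficiently small, yields $\int_{\tilde\Delta}|\tilde f_a|^{2}|H|^{2}\,d\lambda_{w}>C_1|a|^{-2}$, which is exactly the claim with $C_2:=C_1$.

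The main obstacle I anticipate is the normalization step together with the Jacobian bookkeeping: one must argue cleanly that the connected unramified $q$-sheeted cover of the punctured disk is, up to biholomorphism, the standard power map, extend across the singular point, and correctly identify the pullback of $d\lambda_{\Delta'}$ under $w\mapsto w^q$. Once these structural ingredients are in place, the conclusion follows directly from the planar lemma already established.
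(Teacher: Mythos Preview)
Your proposal is correct and follows essentially the same route as the paper: normalize the irreducible curve by a disk so that $\pi\circ\nu(w)=w^{q}$, pull back the integral to pick up the Jacobian factor $q^{2}|w|^{2q-2}$, absorb $w^{q-1}$ into the weight function, and invoke Lemma~\ref{l:open_b} with $k=q$. The paper phrases the normalization step slightly more loosely (writing $g_{1}(t)=t^{i_{1}}$ ``without loss of generality'' and restricting to a subdisk, whence an inequality rather than an equality), but the substance is identical; your covering-space justification for $\pi\circ\nu(w)=w^{q}$ is in fact cleaner, and your remark that $h\not\equiv 0$ is an implicit hypothesis is well taken.
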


\begin{proof}
As $(\mathcal{C},o)$ is irreducible and locally irreducible, then
there is a normalization $j_{nor}:(\Delta,0)\to(\mathcal{C},o)$,
denoted by
$$j_{nor}(t)=(g_{1}(t),\cdots,g_{n}(t)),$$ where $t$ is the
coordinate of $\Delta$. As $\pi_{\mathcal{C}}$ is a covering, then
$g_{1}\not\equiv0$.

Without loss of generality, we may assume $g_{1}(t)=t^{i_{1}}$ on
$\Delta_{r_0}$ for small enough $r_{0}\in(0,1)$.

There is a given $r>0$, which is small enough, such that
$$(\mathcal{C}\cap\Delta^{n}_{r_{0}})\supset\{(t^{i_1},g_{2}(t),\cdots,g_{n}(t))|t\in\Delta_{r}\},$$
where $i_{1}\geq 1$, and $g_{i}$ ($i\geq 2$) are holomorphic
functions on $\Delta_{r}$ satisfying $|g_{i}|\leq \frac{1}{6}
|t^{i_1}|$.

For given $r'<r$ small enough, we have
\begin{equation}
\label{equ:0808b}
\begin{split}
\int_{\mathcal{C}_{S}}|f_{a}|^{2}|h|^{2}(\pi|_{\mathcal{C}_{S}})^{*}d\lambda_{\Delta'}
&\geq i_{1}^{2}\int_{\Delta_{r'}}|j_{nor}^{*}f_{a}(t)|^{2}|j_{nor}^{*}h(t)|^{2}|t|^{2(i_{1}-1)}d\lambda_{\Delta}
\\&=i_{1}^{2}\int_{\Delta_{r'}}|j_{nor}^{*}f_{a}(t)(t^{i_{1}-1}j_{nor}^{*}h(t))|^{2}d\lambda_{\Delta},
\end{split}
\end{equation}
for any $a$ satisfying $|a|^{\frac{1}{i_1}}\in\Delta_{r'}$.

As
$$j_{nor}^{*}f_{a}(b)=f_{a}(b^{i_1},g_{2}(b),\cdots,g_{n}(b))$$
and
$$(b^{i_1},g_{2}(b),\cdots,g_{n}(b))\subset(\pi^{-1}(b^{i_1})\cap\mathcal{C}),$$
then we have
$$j_{nor}^{*}f_{a}(b)=1,$$
for any $b^{i_{1}}=a$.

Using Lemma \ref{l:open_b}, we have
$$\int_{\Delta_{r'}}|j_{nor}^{*}f_{a}(t)(t^{i_{1}-1}j_{nor}^{*}h(t))|^{2}d\lambda_{\Delta}\geq C_{1}|a|^{-2},$$
where $C_{1}$ is independent of $a$ and $f_{a}$.

Combining with inequality \ref{equ:0808b}, we thus obtain the
present lemma.
\end{proof}

As $\mathcal{C}\cap(\Delta'\times\Delta'')$ is contained in a cone
$|z''|\leq\frac{1}{6}|z'|$, using the submean value property of
plurisubharmonic function, we obtain the following lemma:

\begin{Lemma}
\label{l:approx.L2}
For any holomorphic function $F$, which is a holomorphic on a $\Delta'\times\Delta''$,
we obtain an approximation of the $L^{2}$ norm of $F$:

$$\int_{\Delta'\times\Delta''}|F|^{2}d\lambda_{n}\geq
C_{3}\int_{\mathcal{C}_{S}}|F|_{\mathcal{C}_{S}}|^{2}(\pi|_{\mathcal{C}_{S}})^{*}d\lambda_{\Delta'},$$
where $C_{3}$ is a positive constant independent of $F$. Here all
symbols $\mathcal{C}_{S}$, $\Delta'$ and $\pi$ are the same as in
Remark \ref{r:para}.
\end{Lemma}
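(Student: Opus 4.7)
The plan is to exploit the cone containment $\mathcal{C}\cap(\Delta'\times\Delta'')\subset\{|z''|\leq\tfrac16|z'|\}$ to show that, for each fixed $z_1\in\Delta'$, every point $(z_1,z'')\in\mathcal{C}_S$ lies well inside $\Delta''$ in the $z''$-direction, so that a polydisc in $\Delta''$ of fixed radius (independent of $z_1$) always fits around it. The submean-value inequality applied to the plurisubharmonic function $w\mapsto |F(z_1,w)|^2$ on this polydisc then gives a pointwise bound on $|F|^2$ on $\mathcal{C}_S$ by the fiber integral, and integration in $z_1$ finishes the job.

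More precisely, first I would unfold the pullback measure by using Remark~\ref{r:para}: $\pi|_{\mathcal{C}_S}:\mathcal{C}_S\to\Delta'\setminus S$ is an unramified $q$-sheeted covering, so on each sheet we can write $z''=g^{(k)}(z_1)$, $k=1,\dots,q$, with $|g^{(k)}(z_1)|\le\tfrac16|z_1|\le\tfrac16$ for all $z_1\in\Delta'\setminus S$. Hence
\[
\int_{\mathcal{C}_{S}}|F|_{\mathcal{C}_{S}}|^{2}(\pi|_{\mathcal{C}_{S}})^{*}d\lambda_{\Delta'}
=\sum_{k=1}^{q}\int_{\Delta'\setminus S}\bigl|F(z_1,g^{(k)}(z_1))\bigr|^{2}\,d\lambda_{1}(z_1).
\]

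Next, for each fixed $z_1$ and each $k$, the polydisc $P_k(z_1):=\{w\in\mathbb{C}^{n-1}:|w_j-g^{(k)}_j(z_1)|<\tfrac{5}{6},\ j=2,\dots,n\}$ is contained in $\Delta''$ because $|g^{(k)}(z_1)|\le\tfrac16$. The function $w\mapsto|F(z_1,w)|^{2}$ is plurisubharmonic on $\Delta''$, so the submean-value inequality gives
\[
\bigl|F(z_1,g^{(k)}(z_1))\bigr|^{2}
\le\frac{1}{\lambda_{n-1}(P_k(z_1))}\int_{P_k(z_1)}|F(z_1,w)|^{2}\,d\lambda_{n-1}(w)
\le C\int_{\Delta''}|F(z_1,w)|^{2}\,d\lambda_{n-1}(w),
\]
with $C=[\pi(5/6)^{2}]^{-(n-1)}$ independent of $z_1$, $k$, and $F$.

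Integrating over $z_1\in\Delta'\setminus S$ (a full-measure subset of $\Delta'$) and summing over the $q$ sheets, Fubini yields
\[
\int_{\mathcal{C}_{S}}|F|_{\mathcal{C}_{S}}|^{2}(\pi|_{\mathcal{C}_{S}})^{*}d\lambda_{\Delta'}
\le qC\int_{\Delta'\times\Delta''}|F|^{2}\,d\lambda_{n},
\]
so the lemma holds with $C_{3}=(qC)^{-1}$. I do not foresee a real obstacle: the only thing that could go wrong is losing control of the radius of the polydisc in the $z''$-direction as $|z_1|\to 0$, and this is exactly what the cone estimate $|z''|\le\tfrac16|z'|$ together with the boundedness $|z_1|\le 1$ rules out, keeping the radius bounded below by a universal constant $\tfrac56$.
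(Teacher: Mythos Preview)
Your argument is correct and follows essentially the same route as the paper: Fubini, the cone condition $|z''|\le\tfrac16|z'|\le\tfrac16$ to fit a fixed-radius polydisc in $\Delta''$ around each fibre point, the submean-value inequality for $|F(z_1,\cdot)|^2$, and a sum over the $q$ preimages. One cosmetic point: since $\mathcal{C}_S\to\Delta'\setminus\{0\}$ is a \emph{connected} $q$-sheeted cover (Remark~\ref{r:para}), for $q>1$ there are no global single-valued branches $g^{(k)}$ over all of $\Delta'\setminus\{0\}$; simply replace your sum over sheets by the fibrewise sum $\sum_{w\in\pi^{-1}(z_1)\cap\mathcal{C}_S}|F(w)|^2$ (as the paper does), and the rest of your estimate goes through verbatim.
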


\begin{proof}
Using the Fubini Theorem,
$$\int_{\Delta'\times\Delta''}|F|^{2}d\lambda_{n}=
\int_{\Delta'}(\int_{\{z'\}\times\Delta''}|F|^{2}d\lambda_{n-1})d\lambda_{\Delta'},$$
and the submean value inequality of plurisubharmonic function, we
have
$$\int_{\{z'\}\times\Delta''}|F|^{2}d\lambda_{n-1}\geq(\frac{\pi}{3})^{n-1}|F(z',z'')|^{2},$$
for $|z''|\leq\frac{1}{6}$.

If $w=(z',z'')\in(\pi^{-1}(z')\cap\mathcal{C}_{S})$, then $|z''|\leq
\frac{1}{6}$.

As
$$\int_{\Delta'\setminus \{0\}}(\sum_{w\in(\pi^{-1}(z')\cap\mathcal{C}_{S})}|F(w)|^{2})(z')d\lambda_{\Delta'}
=\int_{\mathcal{C}_{S}}|F|_{\mathcal{C}_{S}}|^{2}(\pi|_{\mathcal{C}_{S}})^{*}d\lambda_{\Delta'},$$
it follows that
\begin{equation}
\label{equ:0922.1}
\begin{split}
q\int_{\Delta'\times\Delta''}|F|^{2}d\lambda_{n}=
&q\int_{\Delta'\setminus \{0\}}(\int_{\{z'\}\times\Delta''}|F|^{2}d\lambda_{n-1})d\lambda_{\Delta'}
\\\geq& (\frac{\pi}{3})^{n-1}\int_{\Delta'\setminus \{0\}}(\sum_{w\in(\pi^{-1}(z')\cap\mathcal{C}_{S})}|F(w)|^{2})(z')d\lambda_{\Delta'}
\\=&(\frac{\pi}{3})^{n-1}\int_{\mathcal{C}_{S}}|F|_{\mathcal{C}_{S}}|^{2}(\pi|_{\mathcal{C}_{S}})^{*}d\lambda_{\Delta'},
\end{split}
\end{equation}
where $q$ is the degree of the covering map
$\pi|_{\mathcal{C}_{S}}$.
\end{proof}

\subsection{$L^{2}$ extension theorem with negligible weight}
$\\$

We state optimal constant version of the Ohsawa's $L^2$ extension
theorem with negligible weight (\cite{ohsawa3}) as follows:

\begin{Theorem}\label{t:guan-zhou12}
\cite{guan-zhou12}
Let $X$ be a Stein manifold of dimension n. Let $\varphi+\psi$ and
$\psi$ be plurisubharmonic functions on $X$. Assume that $w$ is a
holomorphic function on $X$ such that
$\sup\limits_X(\psi+2\log|w|)\leq0$ and $dw$ does not vanish
identically on any branch of $w^{-1}(0)$. Put $H=w^{-1}(0)$ and
$H_0=\{x\in H:dw(x)\neq 0\}$. Then  there exists a uniform constant
$\mathbf{C}=1$ independent of $X$, $\varphi$, $\psi$ and $w$ such
that, for any holomorphic $(n-1)$-form $f$ on $H_0$ satisfying
$$c_{n-1}\int_{H_0}e^{-\varphi-\psi}f\wedge\bar f<\infty,$$
where
$c_{k}=(-1)^{\frac{k(k-1)}{2}}(\sqrt{-1})^k$ for $k\in\mathbb{Z}$,
there exists a holomorphic $n$-form F on $X$ satisfying $F=dw\wedge
\tilde{f}$ on $H_0$ with $\imath^*\tilde{f}=f$ and
$$c_n\int_{X}e^{-\varphi}F\wedge\bar F\leq 2\mathbf{C}\pi
c_{n-1}\int_{H_0}e^{-\varphi-\psi}f\wedge\bar f,$$
where
$\imath:H_0\longrightarrow X$ is the inclusion map.
\end{Theorem}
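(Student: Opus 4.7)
The plan is to prove the optimal-constant $L^2$ extension via Hörmander's $\bar\partial$-technique with a delicate twisting. First I would perform the standard reductions: exhaust the Stein manifold $X$ by an increasing sequence of relatively compact strongly pseudoconvex domains with smooth boundary, approximate $\varphi$ and $\psi$ by decreasing sequences of smooth plurisubharmonic functions (adding $\epsilon |z|^2$ for strict positivity), and perturb so that $dw$ does not vanish on a neighborhood of $H_0$. Since the estimate is linear and the target constant $2\mathbf{C}\pi$ must be independent of all these data, it suffices to prove it in the smooth bounded pseudoconvex setting and pass to a weak limit at the end.

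Next I would set up the $\bar\partial$-equation. Let $\tilde f$ be a smooth extension of $f$ to a neighborhood of $H$, and let $\chi_\epsilon$ be a cutoff function of $t = -\psi - 2\log|w|$ supported away from $H$ that concentrates on $H$ as $\epsilon\to 0$. Form the smooth $n$-form $F_0 = \chi_\epsilon(-\psi - 2\log|w|)\, dw \wedge \tilde f$ and solve $\bar\partial u_\epsilon = \bar\partial F_0$ with the Hörmander $L^2$ estimate, so that $F_\epsilon := F_0 - u_\epsilon$ is holomorphic. A standard argument using integrability of $|u_\epsilon|^2/(|w|^2 (\log|w|)^2)$ forces $u_\epsilon$ to vanish along $H$, ensuring $F_\epsilon$ extends $dw\wedge \tilde f$.

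The heart of the proof is the choice of weight. I would apply the twisted Bochner-Kodaira-Nakano identity with weight $\varphi + \lambda(-\psi - 2\log|w|)$ and twist factor $\eta(-\psi - 2\log|w|)$, where $\lambda,\eta$ are smooth functions of a single real variable to be chosen. Since $-\psi - 2\log|w|$ is plurisubharmonic by hypothesis, the $\lambda''$ term in the curvature of the weight is automatically nonnegative for increasing convex $\lambda$; only the Hessian of the ambient weight $\varphi$ is needed to absorb the remaining terms. The right-hand side of the identity contains $\chi_\epsilon'(t)\bar\partial(-\psi -2\log|w|) \wedge dw \wedge \tilde f$, and a Cauchy-Schwarz step produces a factor that is absorbed against $(1+\eta\lambda'^{-1})$ or a similar combination. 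Choosing $\eta,\lambda$ so that a specific first-order ODE system is satisfied makes this Cauchy-Schwarz step sharp in the limit $\epsilon \to 0$, and a Poincaré-Lelong/residue computation around $w = 0$ (which contributes the factor $2\pi$) reduces the surviving boundary term to $c_{n-1}\int_{H_0} e^{-\varphi-\psi} f\wedge \bar f$.

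Finally I would pass to the limit: uniform control on $\int_X |u_\epsilon|^2 e^{-\varphi}$ allows extraction of a weak limit $u$, and $F := \lim(F_0 - u_\epsilon)$ is the desired holomorphic extension with estimate $c_n\int_X e^{-\varphi} F\wedge\bar F \leq 2\pi c_{n-1}\int_{H_0} e^{-\varphi-\psi} f\wedge\bar f$. The main obstacle is exactly the ODE-matching for $(\eta,\lambda)$: the pre-\cite{guan-zhou12} versions of Ohsawa-Takegoshi produced a constant $\mathbf{C}>1$ because the analogous Cauchy-Schwarz step was slack by a fixed amount; obtaining $\mathbf{C}=1$ requires a precise, non-obvious pairing of the twist and weight so that the slack vanishes in the limit. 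This is also where the \emph{negligible} weight $\psi$ enters nontrivially — it must appear inside $\lambda$ and $\eta$ rather than multiplying the estimate directly, so that it does not contribute to the final constant.
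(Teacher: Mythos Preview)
The paper does not contain a proof of this theorem at all: Theorem~\ref{t:guan-zhou12} is quoted from \cite{guan-zhou12} as a black box and is used as an ingredient in the proof of Theorem~\ref{t:strong_open}. There is therefore nothing in the present paper to compare your argument against.

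That said, your sketch is broadly the correct shape for the proof in \cite{guan-zhou12}: exhaustion and regularization, a cutoff ansatz $F_0=\chi_\epsilon(-\psi-2\log|w|)\,dw\wedge\tilde f$, solving $\bar\partial u_\epsilon=\bar\partial F_0$ with a twisted Bochner--Kodaira estimate where the twist and the additional weight are functions of $t=-\psi-2\log|w|$, and extracting the constant $2\pi$ via a residue/Poincar\'e--Lelong computation. The genuinely delicate point, which you correctly flag but do not carry out, is the exact choice of the pair $(\eta,\lambda)$ (or equivalently of the twist factor and the auxiliary weight) so that the Cauchy--Schwarz loss vanishes in the limit; in \cite{guan-zhou12} this is done by solving a specific ODE system and then optimizing over a free parameter. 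Your description ``a specific first-order ODE system'' is accurate in spirit but is the entire content of the improvement from $\mathbf{C}>1$ to $\mathbf{C}=1$, so a complete write-up would need to display those functions explicitly and verify the resulting inequality line by line. One small inaccuracy: the cutoff $\chi_\epsilon$ should be supported \emph{near} $H$ (large $t$), not ``away from $H$''; otherwise $F_0$ would not restrict to $dw\wedge\tilde f$ along $H_0$.
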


\subsection{Curve selection lemma and Noetherian property of coherent sheaves}
$\\$

We give the existence of some kind of germs of analytic curves,
which will be used.

\begin{Lemma}
\label{l:curve_exist}
Let $(Y,o)$ be a germ of irreducible analytic subvariety in $\mathbb{C}^{n}$,
and $(A,o)$ be a germ of analytic subvariety of $(Y,o)$,
such that $dim A<dim Y$.
Then there exists a germ of holomorphic curve $(\gamma,o)$,
such that $\gamma\subset Y$, and $\gamma\not\subset A$.
\end{Lemma}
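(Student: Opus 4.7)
The plan is to apply a Noether-type local parametrization to $(Y,o)$, push $A$ forward to get a proper analytic germ in the base, choose a line through the origin that avoids the image of $A$ as a germ, and pull back an irreducible curve through $o$.

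First I would invoke the local parametrization theorem (Theorem \ref{t:para}): after shrinking to a representative and choosing suitable coordinates, there is a finite ramified covering $\pi \colon Y \cap U \to \Delta^{k}$ of degree $q$, where $k = \dim Y$ and $U$ is a neighbourhood of $o$ in $\mathbb{C}^{n}$. Since $\pi$ is proper with finite fibres, $\pi(A \cap U)$ is an analytic subset of $\Delta^{k}$ by the Remmert proper mapping theorem, and $\dim \pi(A) = \dim A < k$, so $(\pi(A),0)$ is a proper analytic germ in $(\mathbb{C}^{k},0)$.

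Next I would pick a nonzero holomorphic germ $f$ at $0 \in \mathbb{C}^{k}$ that vanishes on $\pi(A)$; such an $f$ exists because the ideal of $\pi(A)$ at $0$ is nonzero. Writing $f = \sum_{i \geq m} f_{i}$ with $f_{i}$ homogeneous of degree $i$ and $f_{m} \not\equiv 0$, I would choose any $v \in \mathbb{C}^{k}$ with $f_{m}(v) \neq 0$. On the line $\ell := \mathbb{C}\cdot v$, the restriction $f(tv) = t^{m} f_{m}(v) + O(t^{m+1})$ has an isolated zero at $t=0$, so $\ell \not\subset \pi(A)$ as germs at $0$. Then I would form $C := \pi^{-1}(\ell) \cap U$, a $1$-dimensional analytic subset of $Y \cap U$ passing through $o$, and let $\gamma$ be an irreducible component of the germ $(C,o)$. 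By the normalization theorem for irreducible $1$-dimensional analytic germs, $\gamma$ admits a holomorphic parametrization from $(\Delta,0) \to (\gamma,o) \subset (Y,o)$, so it is a germ of holomorphic curve in $Y$ through $o$.

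Finally, I would verify $\gamma \not\subset A$ by contradiction: if $\gamma \subset A$, then $\pi(\gamma) \subset \pi(A)$; but $\pi|_{\gamma}$ is finite with $\pi(\gamma) \subset \ell$ and $\dim \pi(\gamma) = 1$, hence $\pi(\gamma) = \ell$ as germs at $0$, forcing $\ell \subset \pi(A)$, contrary to the choice of $\ell$. The main obstacle is precisely the construction of such a line $\ell$; everything else is a routine application of the local parametrization theorem and the finiteness of the projection, while the homogeneous-expansion argument above handles the line-selection step directly once a defining function of $\pi(A)$ is in hand.
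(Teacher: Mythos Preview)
Your argument is correct and considerably more explicit than the paper's. The paper's proof consists of a single sentence: it notes that $(Y,o)$ is locally Stein and invokes Cartan's Theorem~A, leaving to the reader how global generation of coherent sheaves actually produces a curve in $Y$ not contained in $A$ (presumably by cutting $Y$ down with generic holomorphic functions vanishing on $A$). Your route is different: you reduce to $\mathbb{C}^{k}$ via the finite projection $\pi$ furnished by the local parametrization theorem already recalled in the paper (Theorem~\ref{t:para}), use Remmert's proper mapping theorem to see that $\pi(A)$ is a proper analytic germ, select a line $\ell\not\subset\pi(A)$ by an elementary lowest-order homogeneous argument, and then take an irreducible component of $\pi^{-1}(\ell)$. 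The contradiction step---$\gamma\subset A\Rightarrow\pi(\gamma)=\ell\subset\pi(A)$ because $\pi|_{\gamma}$ is finite onto a one-dimensional subgerm of the irreducible germ $(\ell,0)$---is clean. The payoff of your approach is that it stays within the toolkit the paper already sets up (parametrization, finite projections) and makes every step verifiable; the paper's appeal to Cartan's Theorem~A is shorter to write but imports heavier Stein-space machinery and suppresses the actual construction of the curve.
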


\begin{proof}
Note that $(Y,{o})$ is locally Stein. Then using Cartan's Theorem
$A$, we obtain the lemma.
\end{proof}

Now we recall the curve selection lemma stated as follows:

\begin{Lemma}
\label{l:curve}(see \cite{demailly2010}) Let $f$,
$g_{1},\cdots,g_{r}\in\mathcal {O}_{n}$ be germs of holomorphic
functions vanishing at $0$. Then we have $|f|\leq C|g|$ for some
constant $C$ if and only if for every germ of analytic curve
$\gamma$ through $0$ there exists a constant $C_{\gamma}$ such that
$|f\circ \gamma|\leq  C_{\gamma}|g\circ \gamma|$.
\end{Lemma}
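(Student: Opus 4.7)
The plan is to establish the non-trivial (backward) direction by contraposition: assuming $|f|\leq C|g|$ fails for every constant $C$ on every neighborhood of $0$, I would produce a germ of analytic curve $\gamma$ through $0$ along which $|f\circ\gamma|/|g\circ\gamma|$ is unbounded. The forward direction is immediate, since if $|f|\leq C|g|$ on some neighborhood $U$ of $0$, restricting to any germ of analytic curve $\gamma\subset U$ yields $|f\circ\gamma|\leq C|g\circ\gamma|$.

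For the converse, I would blow up the ideal $I=(g_{1},\dots,g_{r})$. Concretely, let $B$ be the closure of the graph of the meromorphic map $z\mapsto[g_{1}(z):\cdots:g_{r}(z)]$ in $\Delta^{n}\times\mathbb{P}^{r-1}$, and let $\pi:B\to\Delta^{n}$ be the natural projection. On each standard affine chart of $B$ the pulled-back ideal $\pi^{\ast}I$ is principal, generated by one of the $\pi^{\ast}g_{j}$; call such a local generator $h$. The inequality $|f|\leq C|g|$ near $0$ translates to the function $\pi^{\ast}f/h$ being locally bounded on $B$, which (after passing to a normalization) is equivalent to $\pi^{\ast}f$ lying in the principal ideal $(h)$. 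If the inequality fails, there is at least one irreducible analytic component $Y$ of $B$ with $o\in\pi(Y)$ on which $\pi^{\ast}f/h$ is not locally bounded, and the locus $A\subset Y$ where it is bounded is a proper analytic subset with $\dim A<\dim Y$.

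At this point I would apply Lemma \ref{l:curve_exist} to the pair $(Y,A)$ (at a chosen point $p_{0}\in\pi^{-1}(o)\cap Y\setminus A$) to produce a germ of analytic curve $\tilde\gamma\subset Y$ with $\tilde\gamma\not\subset A$; projecting down, $\gamma:=\pi(\tilde\gamma)$ is a germ of analytic curve in $\Delta^{n}$ through $o$. Choosing a holomorphic parametrization of $\tilde\gamma$ and comparing vanishing orders then shows $\mathrm{ord}_{0}(f\circ\gamma)<\min_{j}\mathrm{ord}_{0}(g_{j}\circ\gamma)$, so $|f\circ\gamma|/|g\circ\gamma|$ is unbounded near $0$, contradicting the curve-wise hypothesis. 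The main obstacle is dealing with the singularities of $B$ cleanly: one needs to pass to a normalization of the relevant component (or, more robustly, to a Hironaka resolution on which $\pi^{\ast}I$ is a normal-crossings ideal) to make rigorous sense of the relevant vanishing orders, after which Lemma \ref{l:curve_exist} supplies precisely the curve needed to complete the contradiction.
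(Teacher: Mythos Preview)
The paper does not itself prove Lemma~\ref{l:curve}; it is quoted from \cite{demailly2010}. What the paper does prove is Lemma~\ref{p:curve}, explicitly described as ``contained in the proof of Lemma~\ref{l:curve},'' and that proof is the relevant comparison. Your outline is sound, but the route is genuinely different. You blow up the ideal $(g_{1},\dots,g_{r})$ inside $\Delta^{n}\times\mathbb{P}^{r-1}$, principalize $\pi^{\ast}I$, and then analyse $\pi^{\ast}f$ against the local generator~$h$; as you correctly flag, this requires passing to a normalization (or a log resolution) before divisibility and pole orders make sense, and one must also ensure the curve produced by Lemma~\ref{l:curve_exist} is not contained in the exceptional divisor so that its image is still a curve. (A small slip: your set $A$ should be the \emph{un}bounded locus, i.e.\ the pole divisor together with the exceptional divisor, not the bounded locus; with that correction the appeal to Lemma~\ref{l:curve_exist} goes through.)

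The paper's argument is more elementary and avoids normalization entirely. It works in affine space, setting
\[
Y=\{(z,w)\in\Delta^{n}_{r}\times\mathbb{C}^{s}\;:\;g_{j}(z)=f(z)\,w_{j},\ 1\le j\le s\},
\]
and takes the irreducible component $Y_{f}$ containing the graph of $g/f$ over $\{f\neq 0\}$. The failure of $|f|\le C|g|$ gives a sequence $z_{\nu}\to o$ with $g(z_{\nu})/f(z_{\nu})\to 0$, so $(z_{\nu},g(z_{\nu})/f(z_{\nu}))\to o$ and hence $o\in Y_{f}$; Lemma~\ref{l:curve_exist} then yields a curve in $Y_{f}$ through $o$ not contained in $\{f=0\}$, whose projection to $\Delta^{n}$ is the desired~$\gamma$. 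This construction immediately gives the sharper conclusion that each $g_{j}/f$ extends holomorphically across $o$ on $\gamma$ with value~$0$ there, which is exactly what Proposition~\ref{r:curve} needs downstream. Your blow-up approach is the standard integral-closure packaging and generalizes well, but for the present purposes the affine graph argument is shorter, needs no desingularization, and lands directly on the statement actually used later.
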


In order to obtain some uniform properties of $\gamma$,
we need to consider the following Lemma
which was contained in the proof of Lemma \ref{l:curve} in \cite{demailly2010}.

\begin{Lemma}
\label{p:curve}(see \cite{demailly2010}) Let $f$,
$g_{1},\cdots,g_{s}\in\mathcal {O}_{n}$ be germs of holomorphic
functions vanishing at $o$. Assume that for any given neighborhood
of $o$, $|f|\leq C|g|$ doesn't hold for any constant $C$, where
$g=(g_1,...,g_s)$. Then there exists a germ of analytic curve
$\gamma$ through $o$ satisfying $\gamma\cap\{f=0\}=o$, such that
$\frac{g_{i}}{f}|_{\gamma}$ is holomorphic on $\gamma\setminus o$
with
$$\widetilde{\frac{g_{i}}{f}}|_{\gamma}(o)=0,$$
for any $i\in\{1,\cdots,s\}$, where $\widetilde{\frac{g_{i}}{f}}$ is
the holomorphic extension of $\frac{g_{i}}{f}$ from $\gamma\setminus
o$ to $\gamma$.
\end{Lemma}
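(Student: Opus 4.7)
The plan is to extract the desired curve directly from the curve selection lemma already in hand (Lemma \ref{l:curve}) and then upgrade its qualitative conclusion---unboundedness of $|f|/|g|$ along some curve---to the quantitative holomorphic-extension statement by an order-of-vanishing argument on the normalization.

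First I would invoke the contrapositive of Lemma \ref{l:curve}: the hypothesis is exactly the failure of $|f|\leq C|g|$ in every neighborhood of $o$ for every constant $C$, so Lemma \ref{l:curve} yields a germ of analytic curve $(\gamma,o)$ along which no inequality $|f\circ\gamma|\leq C_\gamma|g\circ\gamma|$ holds; equivalently, $|f\circ\gamma|/|g\circ\gamma|$ is unbounded as one approaches $o$ along $\gamma$. Replacing $\gamma$ with an irreducible component that still carries this unboundedness (at least one must, since a uniform bound on every component would bound the ratio on the union), I may assume $\gamma$ is irreducible and take its normalization $j:(\Delta,0)\to(\gamma,o)$.

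Then $f\circ j$ and each $g_i\circ j$ are holomorphic on $\Delta$ and vanish at $0$, and the unboundedness of $|f\circ j(t)|/|g\circ j(t)|$ near $t=0$, combined with the trivial bound $|g_i|\leq|g|$, forces
$$\mathrm{ord}_0(f\circ j)<\mathrm{ord}_0(g_i\circ j)\qquad\text{for every }i,$$
where the right-hand side is allowed to be $+\infty$ when $g_i\circ j\equiv 0$. In particular $f\circ j\not\equiv 0$, so $f$ does not vanish identically on $\gamma$ and, after shrinking $\gamma$, we obtain $\gamma\cap\{f=0\}=\{o\}$. From the strict order comparison, $(g_i\circ j)/(f\circ j)$ is actually holomorphic on $\Delta$ and vanishes at $0$; pushing this back through $j$ shows that $(g_i/f)|_{\gamma\setminus o}$ extends holomorphically across $o$ with value zero, which is the stated conclusion. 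No step here is genuinely hard once Lemma \ref{l:curve} is invoked as a black box---the substantive content sits in that lemma---and the only mildly delicate points are the pigeonhole reduction to an irreducible component and the passage from the analytic inequality $|g_i|\leq|g|$ to coordinate-wise order comparisons on $\Delta$, both elementary.
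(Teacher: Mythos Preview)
Your argument is correct, but it is not the route the paper takes. You invoke Lemma~\ref{l:curve} as a black box, obtain a curve on which $|f|/|g|$ is unbounded, pass to an irreducible component, normalize, and read off the strict order inequality $\mathrm{ord}_0(f\circ j)<\mathrm{ord}_0(g_i\circ j)$ for each $i$; from this the holomorphic extension of $g_i/f$ with value $0$ at $o$ follows immediately. The paper instead \emph{reconstructs} the curve by the graph method that underlies Lemma~\ref{l:curve}: it forms the analytic set $Y=\{g_j(z)=f(z)z_{n+j}\}\subset\Delta^n_r\times\mathbb{C}^s$, takes the irreducible component $Y_f$ containing the graph of $g/f$ over $\{f\neq 0\}$, uses the hypothesis to produce a sequence $(z_\nu,g(z_\nu)/f(z_\nu))\to o$ showing $o\in Y_f$, and then applies Lemma~\ref{l:curve_exist} to find a curve in $Y_f$ through $o$ not contained in $\{f=0\}$. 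On that curve the extra coordinates $z_{n+j}$ are literally $g_j/f$, so holomorphicity and vanishing at $o$ are automatic via Riemann's removable singularity theorem. Your approach is shorter once Lemma~\ref{l:curve} is granted; the paper's is more self-contained, essentially re-deriving the hard direction of Lemma~\ref{l:curve} in the process and yielding the curve inside an explicit ambient variety.
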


\begin{proof}
There exists $\Delta_{r}^{n}$, such that
$g_{1},\cdots,g_{s},f\in\mathcal{O}(\Delta_{r}^{n})$. We define a
germ of analytic set $(Y,o)\subset
(\Delta^{n}_{r}\times\mathbb{C}^{s},o)$ by
$$g_{j}(z)=f(z)z_{n+j},\quad 1\leq j\leq s.$$

Let $p$ be a projection
$$p:\Delta^{n}_{r}\times\mathbb{C}^{s}\to\Delta^{n}_{r},$$
such that
$$p((z_{1},\cdots,z_{n}),(z_{n+1},\cdots,z_{n+s}))=(z_{1},\cdots,z_{n}).$$
Then $Y\cap p^{-1}(\Delta^{n}_{r}\setminus \{f=0\})$ is
biholomorphic to $\Delta^{n}_{r}\setminus \{f=0\}$, which is
irreducible. As every analytic variety has an irreducible
decomposition, then $Y$ contains an irreducible component $Y_{f}$
which contains $Y\cap p^{-1}(\Delta^{n}_{r}\setminus \{f=0\})$.

Since $Y_{f}$ is closed, then
$$Y_{f}=\overline{Y\cap p^{-1}(\Delta^{n}_{r}\setminus \{f=0\})}.$$

By assumption, for any given neighborhood of $o$, $|f|\leq C|g|$
doesn't hold for any constant $C$, then there exist a sequence of
positive numbers $C_{\nu}$ which goes to $+\infty$ as
$\nu\to\infty$, and a sequence of points $\{z_{\nu}\}$ in
$\Delta^{n}_{r}$ which is convergent to $o$ as $\nu\to\infty$, such
that $|f(z_{\nu})|> C_{\nu}|g(z_{\nu})|$.

Then $(z_{\nu},\frac{g(z_{\nu})}{f(z_{\nu})})$ converges to $o$ as
$\nu$ tends to $+\infty$, with $f(z_{\nu})\neq 0$.

As $(z_{\nu},\frac{g(z_{\nu})}{f(z_{\nu})})\in Y_{f}$, then $Y_{f}$
contains $o$.

It follows from Lemma \ref{l:curve_exist} that there exists a germ
of analytic curve $(\gamma,o)\subset Y_{f}$ through $o$ satisfying
$\gamma\cap\{f=0\}=o$, such that $\frac{g_{i}}{f}|_{\gamma}$ is
holomorphic on $\gamma\setminus o$ for each $i\in \{1,\cdots,s\}$.

By the Riemann removable singularity theorem, it follows that
$\frac{g_{i}}{f}|_{\gamma\setminus o}$ can be extended to $\gamma$,
and
$$\widetilde{\frac{g_{i}}{f}}|_{\gamma}(o)=0,$$
for any $i\in\{1,\cdots,s\}$.
\end{proof}

\begin{Remark}
\label{r:curve927} Let $g_{1},\cdots,g_{s}\in\mathcal {O}_{n}$ be
germs of holomorphic functions vanishing at $o$, and $f(o)\neq 0$.
Then there exists a germ of analytic curve $\gamma$ through $o$
satisfying $\gamma\cap\{f=0\}=\emptyset$, such that
$\frac{g_{i}}{f}|_{\gamma}$ is holomorphic on $\gamma$ with
$\frac{g_{i}}{f}(o)=0,$ for any $i\in\{1,\cdots,s\}$.
\end{Remark}

Let's recall a strong Noetherian property of coherent sheaves as
follows:

\begin{Lemma}
\label{l:strong_noeth}(see \cite{demailly-book}) Let $\mathscr{F}$
be a coherent analytic sheaf on a complex manifold $M$, and let
$\mathscr{F}_{1}\subset\mathscr{F}_{2}\subset\cdots$ be an
increasing sequence of coherent subsheaves of $\mathscr{F}$. Then
the sequence $(\mathscr{F}_{k})$ is stationary on every compact
subset of $M$.
\end{Lemma}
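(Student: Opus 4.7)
The plan is to reduce by compactness to a local stabilization statement at each point, then establish local stabilization by combining the Noetherian property of the stalk with a coherence-based propagation argument.

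First I would observe that the conclusion is of local nature modulo compactness: if for every $x\in M$ there exist an open neighborhood $U_x$ of $x$ and an integer $k_x$ such that $\mathscr{F}_k|_{U_x}=\mathscr{F}_{k_x}|_{U_x}$ for all $k\geq k_x$, then any compact subset $K\subset M$ is covered by finitely many such $U_{x_i}$, and taking $k^{*}=\max_i k_{x_i}$ yields $\mathscr{F}_k|_K=\mathscr{F}_{k^{*}}|_K$ for all $k\geq k^{*}$. So it suffices to prove the local statement at an arbitrary fixed point $x\in M$.

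Next I would establish stabilization at the stalk. Since the local ring $\mathcal{O}_{M,x}$ is Noetherian (the classical theorem that the ring of convergent power series in finitely many variables is Noetherian) and $\mathscr{F}_x$ is a finitely generated $\mathcal{O}_{M,x}$-module by coherence of $\mathscr{F}$, the ascending chain of submodules $(\mathscr{F}_{k,x})_k$ stabilizes at some index $k_0$, with $\mathscr{F}_{k,x}=\mathscr{F}_{k_0,x}$ for every $k\geq k_0$. Then I would replace $\mathscr{F}$ by the coherent quotient $\mathscr{F}/\mathscr{F}_{k_0}$ and each $\mathscr{F}_k$ with $k\geq k_0$ by its image, which remains coherent, reducing to the case where $\mathscr{F}_{k,x}=0$ for all $k\geq k_0$; the task becomes exhibiting a neighborhood $U$ of $x$ on which every such $\mathscr{F}_k$ vanishes. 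I would argue by induction on the dimension $d$ of the germ of $\mathrm{supp}(\mathscr{F})$ at $x$. The base case is $\mathscr{F}_x=0$, where coherence of $\mathscr{F}$ immediately forces $\mathscr{F}$ itself to vanish in a neighborhood of $x$, hence so do all $\mathscr{F}_k$. For the inductive step I would use the finiteness of the associated primes of $\mathscr{F}_x$ together with the fact that each $\mathrm{supp}(\mathscr{F}_k)$ is a closed analytic subset of $\mathrm{supp}(\mathscr{F})$ missing $x$, in order to reduce to chains inside coherent sheaves whose stalks at $x$ have strictly smaller support dimension.

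The main obstacle is the uniformity in $k$ of the propagation step. Although each individual $\mathscr{F}_k$ with $k\geq k_0$ has vanishing stalk at $x$ and therefore vanishes on some open neighborhood $V_k$ of $x$ by the standard fact that a coherent sheaf with zero stalk at a point is zero nearby, the neighborhood $V_k$ depends on $k$, and the intersection $\bigcap_k V_k$ need not contain any open neighborhood of $x$ a priori. It is precisely this failure of automatic uniformity that forces the inductive argument on the dimension of $\mathrm{supp}(\mathscr{F})$ rather than a one-step application of coherence.
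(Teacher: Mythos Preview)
The paper does not give its own proof of this lemma; it is stated with a citation to Demailly's book and used as a black box. So there is no in-paper argument to compare against, only the textbook proof you are implicitly aiming at.

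Your reductions are sound: compactness reduces the problem to finding, for each $x$, a neighborhood on which the chain stabilizes; Noetherianness of $\mathcal{O}_{M,x}$ and finite generation of $\mathscr{F}_x$ give stabilization of stalks at some $k_0$; passing to $\mathscr{F}/\mathscr{F}_{k_0}$ is legitimate. You also correctly isolate the crux: the neighborhoods $V_k$ on which $\mathscr{F}_k$ vanishes may shrink with $k$, so one needs a uniform argument.

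The gap is in your inductive step. You say you would ``reduce to chains inside coherent sheaves whose stalks at $x$ have strictly smaller support dimension,'' but you do not produce any such ambient sheaf. After the quotient, each $\mathscr{F}_k$ has support missing $x$, but these supports vary with $k$; there is no evident single coherent subsheaf $\mathscr{G}\subset\mathscr{F}$ with $\dim_x\mathrm{supp}(\mathscr{G})<\dim_x\mathrm{supp}(\mathscr{F})$ that contains all of them. Invoking associated primes of $\mathscr{F}_x$ does not help directly, since the $\mathscr{F}_k$ are not contained in any fixed primary component. As written, the induction has no object to recurse on.

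The standard route (as in Demailly or Grauert--Remmert) avoids this by first reducing to $\mathscr{F}=\mathcal{O}^p$ via a local presentation, then to $p=1$ by projecting and taking kernels, and finally handling increasing chains of coherent ideal sheaves in $\mathcal{O}_{\mathbb{C}^n}$ by induction on $n$ using the Weierstrass preparation theorem: a nonzero ideal contains a Weierstrass polynomial in $z_n$, and division by it reduces the comparison of ideals to a comparison of finitely many coherent sheaves over $\mathcal{O}_{\mathbb{C}^{n-1}}$. That is where the genuine descent in dimension occurs, and it is this mechanism that is missing from your sketch.
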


\begin{Remark}
\label{r:station}
By Lemma \ref{l:strong_noeth},
it is clear that
$\cup_{\varepsilon>0}\mathcal{I}((1+\varepsilon)\varphi)$ is a
coherent subsheaf of $\mathcal{I}(\varphi)$;
actually for any open $V_{1}\subset\subset M$,
there exists $\varepsilon_{1}>0$,
such that $\cup_{\varepsilon>0}\mathcal{I}((1+\varepsilon)\varphi)|_{V_{1}}=\mathcal{I}((1+\varepsilon_{1})\varphi)$
\end{Remark}

By Remark \ref{r:station},
we derive the following proposition about the generators of the coherent sheaf
$\mathcal{I}_{+}(\varphi)$:

\begin{Proposition}
\label{r:curve} Assume that $F\in\mathcal {O}_{n}$ is a holomorphic
function on a neighborhood $V_{0}$ of $o$, which is not a germ of
$\mathcal{I}_{+}(\varphi)_{o}$. Let
$g_{1},\cdots,g_{s}\in\mathcal{I}_{+}(\varphi)_{0}$ be germs of
holomorphic functions on a neighborhood $V_{1}\subset\subset V_{0}$
of $o$, such that $g_{1},\cdots,g_{s}$ generate
$\mathcal{I}_{+}(\varphi)|_{V_1}$. Then there exists a germ of
analytic curve $\gamma$ on $V_{0}$ through $o$ satisfying
$\gamma\cap\{F=0\}=o$, such that
$\widetilde{\frac{g_{i}\circ\gamma}{F\circ\gamma}}$ is holomorphic
on $\gamma$ for any $i$, and
$$\widetilde{\frac{g_{i}\circ\gamma}{F\circ\gamma}}|_{o}=0,$$
where $\widetilde{\frac{g_{i}\circ\gamma}{F\circ\gamma}}$ is the
holomorphic extension of $\frac{g_{i}\circ\gamma}{F\circ\gamma}$
from $\gamma\setminus o$ to $\gamma$. Moveover, for any germ $g$ of
$\mathcal{I}((1+\varepsilon)\varphi)_{0}$,
$\widetilde{\frac{g\circ\gamma}{F\circ\gamma}}$ is holomorphic on
$\gamma\setminus o$, and
$$\widetilde{\frac{g\circ\gamma}{F\circ\gamma}}|_{o}=0,$$
where $\widetilde{\frac{g\circ\gamma}{F\circ\gamma}}$ is the
holomorphic extension of $\frac{g\circ\gamma}{F\circ\gamma}$ from
$\gamma\setminus o$ to $\gamma$.
\end{Proposition}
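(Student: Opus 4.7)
The plan is to apply Lemma \ref{p:curve} (when $F(o)=0$) or Remark \ref{r:curve927} (when $F(o)\neq 0$) with $f=F$ and the tuple $g=(g_1,\ldots,g_s)$. First note that each $g_i(o)=0$: otherwise some $g_i$ would be a unit in $\mathcal{O}_{n,o}$ and so $\mathcal{I}_+(\varphi)_o=\mathcal{O}_{n,o}$, contradicting $F\notin\mathcal{I}_+(\varphi)_o$. The heart of the argument is therefore the verification of the hypothesis of Lemma \ref{p:curve}: no inequality of the form $|F|\leq C|g|$ can hold on any neighborhood of $o$.

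\textbf{Verifying the no-bound hypothesis.} I would argue by contradiction. Suppose $|F|\leq C|g|$ on some neighborhood $W\ni o$ with constant $C>0$. By Remark \ref{r:station}, after shrinking $V_1$ if necessary, there exists $\varepsilon_1>0$ with $\mathcal{I}_+(\varphi)|_{V_1}=\mathcal{I}((1+\varepsilon_1)\varphi)|_{V_1}$. In particular each $|g_i|^2 e^{-(1+\varepsilon_1)\varphi}$ is locally integrable at $o$. Squaring the pointwise bound yields
$$|F|^2 e^{-(1+\varepsilon_1)\varphi}\leq C^2\sum_{i=1}^s|g_i|^2 e^{-(1+\varepsilon_1)\varphi}$$
on $W$, hence $|F|^2 e^{-(1+\varepsilon_1)\varphi}$ is locally integrable at $o$, so $F\in\mathcal{I}((1+\varepsilon_1)\varphi)_o\subset\mathcal{I}_+(\varphi)_o$, a contradiction. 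Consequently the hypothesis of Lemma \ref{p:curve} is met, and we obtain a germ of analytic curve $\gamma$ through $o$ with $\gamma\cap\{F=0\}=o$ (respectively $=\emptyset$ via Remark \ref{r:curve927} in the unit case), along which each $\widetilde{g_i\circ\gamma/F\circ\gamma}$ is holomorphic and vanishes at $o$.

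\textbf{The ``moreover'' clause.} For any germ $g\in\mathcal{I}((1+\varepsilon)\varphi)_o$ we have $g\in\mathcal{I}_+(\varphi)_o$, so the generating property supplies holomorphic $h_1,\ldots,h_s$ on a neighborhood of $o$ with $g=\sum_{i=1}^s h_i g_i$. Dividing by $F\circ\gamma$ on $\gamma\setminus o$ gives
$$\frac{g\circ\gamma}{F\circ\gamma}=\sum_{i=1}^s (h_i\circ\gamma)\,\frac{g_i\circ\gamma}{F\circ\gamma},$$
which is holomorphic on $\gamma\setminus o$ and, by the previous step, each factor $\widetilde{g_i\circ\gamma/F\circ\gamma}$ extends holomorphically to $\gamma$ with value $0$ at $o$. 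Therefore the holomorphic extension $\widetilde{g\circ\gamma/F\circ\gamma}$ exists on $\gamma$ and has value $\sum_i h_i(o)\cdot 0=0$ at $o$, as claimed.

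\textbf{Expected obstacle.} The only delicate point is the reduction of the failure of $|F|\leq C|g|$ to an $L^2$-membership statement; this is where the coherence of $\mathcal{I}_+(\varphi)$ via Remark \ref{r:station} is essential, and it is what lets us translate a purely analytic pointwise hypothesis back into the ideal-theoretic obstruction $F\notin\mathcal{I}_+(\varphi)_o$. The remaining steps---invoking the curve selection lemma and expanding along the generators---are routine manipulations of germs.
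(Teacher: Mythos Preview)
Your proposal is correct and follows essentially the same route as the paper: use Remark \ref{r:station} to pass to a single $\varepsilon_1$, argue that a bound $|F|\le C|g|$ would force $F\in\mathcal{I}((1+\varepsilon_1)\varphi)_o$ via the obvious $L^2$ comparison, and then invoke Lemma \ref{p:curve} (or Remark \ref{r:curve927}) to produce $\gamma$. You are in fact more explicit than the paper, which compresses the no-bound step into a single sentence and omits the proof of the ``moreover'' clause; your treatment of the latter via the expansion $g=\sum h_i g_i$ is exactly the intended argument.
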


\begin{proof}
By Remark \ref{r:station}, there exists $\varepsilon_{1}>0$, such
that
$g_{1},\cdots,g_{s}\in\mathcal{I}((1+\varepsilon_{1})\varphi)(V_{1})$.
As $F$ is not a germ of
$\mathcal{I}_{+}(\varphi)_{o}=\mathcal{I}((1+\varepsilon_{1})\varphi)_{o}$,
then for any neighborhood of $o$, $|F|\leq C (\sum_{1\leq j\leq
s}|g_{j}|^{2})^{1/2}$ doesn't hold for any constant $C$.

By Lemma \ref{p:curve} and Remark \ref{r:curve927}, there exists a
germ of analytic curve $\gamma$ on $V_{0}$ through $o$ satisfying
$\gamma\cap\{F=0\}=o$, such that
$\widetilde{\frac{g_{i}\circ\gamma}{F\circ\gamma}}$ is holomorphic
on $\gamma$ for any $i$, and
$$\widetilde{\frac{g_{i}\circ\gamma}{F\circ\gamma}}|_{0}=0,$$
where $\widetilde{\frac{g_{i}\circ\gamma}{F\circ\gamma}}$ is the holomorphic extension of
$\frac{g_{i}\circ\gamma}{F\circ\gamma}$ from $\gamma\setminus o$ to $\gamma$.
\end{proof}

\section{Proof Theorem \ref{t:strong_open}}

We will prove Theorem \ref{t:strong_open} by the methods of
induction and contradiction, and by using dynamically $L^{2}$
extension theorem with negligible weight.

\subsection{Step 1: Theorem \ref{t:strong_open} for dimension 1 case}
$\\$

We first consider Theorem \ref{t:strong_open} for dimension 1 case,
which is elementary but revealing.

We choose $r_{0}$ small enough, such that
$\{F=0\}\cap\Delta_{r_0}\subset \{o\}$.

As $\int_{\Delta}|F|^{2}e^{-\varphi}d\lambda_{1}<+\infty$,
by Lemma \ref{l:open_a},
we have
$$\liminf_{A\to+\infty}\mu(\{|F|^{2}e^{-\varphi}> A\})u(A)=0.$$

It is clear that, for any given $B>0$, there exist $A>B$ and
$z_{A}\in\Delta_{u(A)^{-1/2}}$, such that
$e^{-\varphi(z_{A})}|F(z_{A})|^{2}\leq A$. We assume that $A>10$.

Let $\psi=-\log2$,
then $\log|z'-z_{A}|+\psi<0$.

Using Theorem \ref{t:guan-zhou12} on $\Delta$, we obtain a
holomorphic function $F_{A}$ on $\Delta$ for each $A$ and $p_{A}>1$,
such that $F_{A}|_{z_{A}}=F(z_{A})$, and
\begin{equation}
\label{equ:0927a}
\int_{\Delta}|F_{A}|^{2}e^{-p_{A}\varphi}d\lambda_{1}<8\pi A.
\end{equation}

By the negativeness of $\varphi$, it follows that
\begin{equation}
\label{equ:0927b}\int_{\Delta}|F_{A}|^{2}d\lambda_{1}<8\pi A.
\end{equation}

Assume Theorem \ref{t:strong_open} for $n=1$ is not true. Therefore
$$\int_{\Delta_{r}}|F|^{2}e^{-p\varphi}d\lambda=+\infty,$$
for any $r>0$ and $p>1$.

Since $\{F=0\}\cap\Delta_{r_0}\subset \{o\}$, then it follows from
inequality \ref{equ:0927a} that there exists a holomorphic function
$h_{A}$ on $\Delta_{r_0}$, such that

1). $F_{A}|_{\Delta_{r_0}}=F|_{\Delta_{r_0}}h_{A}$;

2). $h_{A}(o)=0$;

3). $h_{A}(z_{A})=1$.

By Lemma \ref{l:open_b}, it follows that
$$\int_{\Delta_{r_0}}|F_{A}|^{2}d\lambda_{1}>C_{1}|z_A|^{-2}>C_{1}u(A),$$
where $C_{1}$ is independent of $A$.

It contradicts to
$$\int_{\Delta}|F_{A}|^{2}d\lambda_{1}<8\pi A.$$

We have thus proved Theorem \ref{t:strong_open} for $n=1$.

\subsection{Step 2:  Theorem \ref{t:strong_open} for $n=k$}
$\\$

Assume Theorem \ref{t:strong_open} for $n=k$ is not true. Therefore,
$$\int_{\Delta^{k}_{r}}|F|^{2}e^{-\varphi}d\lambda_{k}<+\infty,$$
for some $r>0$,
and
$$\int_{\Delta^{k}_{r}}|F|^{2}e^{-p\varphi}d\lambda_{k}=+\infty,$$
for any $r>0$ and any $p>1$.

Then the germ of the holomorphic function $F$ is in
$\mathcal{I}(\varphi)_{o}$, but is not in
$\mathcal{I}_{+}(\varphi)_{o}$.

Using Proposition \ref{r:curve}, we have a germ of an analytic curve
$\gamma$ through $o$ satisfying $\{F|_{\gamma}=0\}\subseteq \{o\}$,
such that for any germ of the holomorphic function $g$ in
$\mathcal{I}_{+}(\varphi)$, and we also have a holomorphic function
$h_{g}$ on $\gamma$ satisfying
$$h_{g}|_{o}=0,$$
such that
\begin{equation}
\label{equ:infact}
g|_{\gamma}=F|_{\gamma}h_{g}.
\end{equation}

Then we can choose a biholomorphic map $\imath$ from a neighborhood
of $\overline{\Delta'\times\Delta''}$ to a neighborhood
$V_{o}\subset \Delta^{k}$ of $o$, which is small enough, with origin
keeping $\imath(o)=o$, such that

1), $\imath^{-1}(\gamma)$ is a closed analytic curve in the
neighborhood of $\overline{\Delta'\times\Delta''}$;

2), $\imath^{-1}(\gamma)$ satisfies the parametrization property as
the analytic curve $\mathcal{C}$ in Remark \ref{r:para}.

Note that
$$\int_{V_{o}}|F|^{2}e^{-p\varphi}d\lambda_{n}=
\int_{\Delta'\times\Delta''}|\imath^{*}(F)|^{2}e^{-p\imath^{*}(\varphi)}\imath^{*}(d\lambda_{n}).$$
Then
$$\int_{\Delta^{k}_{r}}|F|^{2}e^{-p\varphi}d\lambda_{n}=+\infty,$$
for any $r>0$ and any $p>1$, is equivalent to
$$\int_{\Delta'_{r}\times\Delta''_{r}}|\imath^{*}(F)|^{2}e^{-p\imath^{*}(\varphi)}d\lambda_{n}=+\infty,$$
for any $r>0$ and any $p>1$.

As
$\int_{\Delta'\times\Delta''}|\imath^{*}(F)|^{2}e^{-\imath^{*}(\varphi)}d\lambda_{k}<+\infty$,
it follows from Lemma \ref{l:open_a} that
$$\liminf_{A\to+\infty}\mu(\{z_{1}|\int_{\pi^{-1}(z_{1})}|\imath^{*}(F)|^{2}e^{-\imath^{*}(\varphi)}d\lambda_{k-1}> A\})u(A)=0,$$
for any $j$, where $\pi$ is the projection in Remark \ref{r:para}.

It is clear that for any given $B>0$, there exists $A>B$, such that
$$\{z_{k}|\int_{\pi^{-1}(z_{1})}|\imath^{*}(F)|^{2}e^{-\imath^{*}(\varphi)}d\lambda_{k-1}> A\}$$
cannot contain
$\Delta_{u(A)^{-1/2}}$.

Then for any given $B>0$, there exist $A>B$ and
$z_{A}\in\Delta_{u(A)^{-1/2}}$, such that
$$\int_{\pi^{-1}(z_{A})}|\imath^{*}(F)|^{2}e^{-\imath^{*}(\varphi)}d\lambda_{k-1}\leq A.$$
We assume that $A>e^{10}$.

\subsubsection{Using dynamically $L^2$ extension theorem with negligible weight}
$\\$

As the conjecture for $n=k-1$ holds, there exists a positive number
$p_{A}>1$, such that
$$\int_{\pi^{-1}(z_{A})}|\imath^{*}(F)|^{2}e^{-p_{A}\imath^{*}(\varphi)}d\lambda_{k-1}<2A.$$

Let $\psi=-\log2$, then $\log|z'-z_{A}|+\psi<0$. Using Theorem
\ref{t:guan-zhou12} on $\Delta'\times\Delta''$, we obtain a
holomorphic function $F_{A}$ on $\Delta'\times\Delta''$ for each
$A$, such that
$F_{A}|_{\pi^{-1}(z_{A})}=\imath^{*}(F)|_{\pi^{-1}(z_{A})}$, and
$$\int_{\Delta'\times\Delta''}|F_{A}|^{2}e^{-p_{A}\imath^{*}(\varphi)}d\lambda_{k}<8\pi A.$$

It follows form equality \ref{equ:infact} that there exists a
holomorphic function on $\gamma$, denoted by $h_{A}$, such that
\begin{equation}
\label{equ:infact1}
\imath_{*}F_{A}|_{\gamma}=F|_{\gamma}h_{A},
\end{equation}
therefore,
\begin{equation}
\label{equ:infact1}
F_{A}|_{\imath^{-1}(\gamma)}=\imath^{*}(F)|_{\imath^{-1}(\gamma)}\imath^{*}(h_{A}),
\end{equation}
where $h_{A}(0)=0$, $\imath^{*}(h_{A})(\imath^{-1}(\gamma)\cap \pi^{-1}(z_{A}))=1$.

It follows from the negativeness of $\varphi$ that
$$\int_{\Delta'\times\Delta''}|F_{A}|^{2}d\lambda_{1}<8\pi A.$$

Using equality \ref{equ:infact1}, the condition
$|z_{A}|<u(A)^{-\frac{1}{2}}$, and Lemma \ref{l:open_sing}, we have
$$\int_{\imath^{-1}(\gamma)}|F_{A}|^{2}d\pi^{*}\lambda_{\Delta'}>C_{1}u(A),$$
where $C_{1}>0$ is independent of $A$ and $F_{A}$. In our use of
Lemma \ref{l:open_sing}, the function $h_A$ corresponds to $f_a$
(where $a=z_A$) in Lemma \ref{l:open_sing}, which does not
correspond to $h$ in the Lemma (actually $F|_{\gamma}$ corresponds
to $h$).

Using Lemma \ref{l:approx.L2}, we obtain
$$\int_{\Delta'\times\Delta''}|F_{A}|^{2}d\lambda_{k}\geq C_{3}\int_{\imath^{-1}(\gamma)}|F_{A}|^{2}d\pi^{*}\lambda_{\Delta'},$$
where $C_{3}>0$ is independent of $A$ and $F_{A}$.

Therefore
$$\int_{\Delta'\times\Delta''}|F_{A}|^{2}d\lambda_{k}\geq C_{1} C_{3}u(A),$$
which contradicts to
$$\int_{\Delta'\times\Delta''}|F_{A}|^{2}d\lambda_{k}<8\pi A,$$
for $A$ large enough.

We have thus proved Theorem \ref{t:strong_open} for $n=k$.

The proof of Theorem \ref{t:strong_open} is thus complete.


\bibliographystyle{references}
\bibliography{xbib}

\end{document}